\newtheorem{thm}{Theorem}[section]
\newtheorem{prop}[thm]{Proposition}
\newtheorem{coro}[thm]{Corollary}
\newtheorem{defn}[thm]{Definition}
\newtheorem{rmq}[thm]{Remark} 
\numberwithin{equation}{section}  
\newcommand{\GL}{\operatorname{GL}}
\newcommand{\Fr}{\operatorname{Framed}}
\title{ \textbf{Rational homotopy type of the space of immersions of a manifold in an euclidian space}}
\date{}
\author{ Abdoul Kader Yacouba Barma}
\begin{document}
\maketitle
\begin{abstract}
Let $M$ be a simply-connected  $m$ dimensional manifold of finite type and $k$ a positif integer. In this paper we show that the rational Betti numbers  
of each component of the space of immersions of $M$ in $\mathbb{R}^{m+k}$, have polynomial growth. As consequence, we deduce
that, if $M$ is a manifold with Euler characteristic $\chi\left(M\right)\leq -2$, the Betti numbers of smooth embeddings, $Emb\left(M ,\mathbb{R}^{m+k}\right)$, have exponential growth if $k\geq  m+1$.\\
 The main tool of this work
is the construction of an explicit model of the space of immersions.
\end{abstract}
\section{Introduction}
Throughout the paper we fix a smooth differential manifold, $M$, of dimension $m$.  We assume that $M$ is simply-connected and of finite type.
An immersion of $M$ in $\mathbb{R}^d$ is a smooth map $f$ from $M $ to $\mathbb{R}^d$ such that  the differential of $f$ at each $x\in M$ is of rank $m$.
The space of immersions  of $M$ in $\mathbb{R}^d$ is the set
\[
Imm\left(M,\mathbb{R}^d\right)=\{ f\colon M\looparrowright{\mathbb{R}^d} \mid f \text{ is an immersion}\},
\]  
equipped with the weak $C^1-$topology.\\
In this paper we determine the rational homotopy type of the path components of $Imm\left(M,\mathbb{R}^{m+k}\right)$. When $k$ is odd and $\neq 0$, that rational homotopy  is easy to describe: 
\begin{thm} [Theorem \ref{casod}]

Let $M$ be a simply connected $m$-manifold of finite type. And let $k=2s+1$ with $s\neq 0$. Suppose that $p_i\left(\tau_M\right)=0, \forall i\geq s+1$,then each connected component  of $Imm\left(M,\mathbb{R}^{m+k}\right)$ has the rational homotopy type of a product of Eilenberg-Maclane spaces, which depends only on the Betti numbers of $M$ and on $k$.
\end{thm}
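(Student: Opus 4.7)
My strategy has three main steps, resting on the Smale--Hirsch $h$-principle and a relative Sullivan model of an associated Stiefel bundle.

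Step 1 (reduction to a section space). By the Smale--Hirsch theorem, the derivative map identifies $\Imm(M,\mathbb{R}^{m+k})$ up to weak equivalence with the space of bundle monomorphisms $\tau_M\hookrightarrow\epsilon^{m+k}_M$, which is in turn the space of sections $\Gamma(M,E)$ of the bundle $E = P\times_{\SO(m)} V_m(\mathbb{R}^{m+k})\to M$, where $P$ is the oriented frame bundle of $\tau_M$ and the fibre $V_m(\mathbb{R}^{m+k})=\SO(m+k)/\SO(k)$ carries the natural action of $\SO(m)\subset\SO(m+k)$. Thus it suffices to compute the rational homotopy type of $\Gamma(M,E)$.

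Step 2 (rational model of the fibration). When $k=2s+1$ is odd and nonzero, the Serre spectral sequence of $\SO(k)\to\SO(m+k)\to V_m(\mathbb{R}^{m+k})$ yields $H^*(V_m(\mathbb{R}^{m+k});\mathbb{Q})=\Lambda W$, an exterior algebra on finitely many generators $w_j$ of odd degrees $n_j$; in particular the fibre is rationally a product of odd spheres, equivalently of rational Eilenberg--MacLane spaces $\prod_j K(\mathbb{Q},n_j)$. I would build a Sullivan model of $E\to M$ by pulling back the universal relative model of the fibration $V_m(\mathbb{R}^{m+k})\to B\SO(k)\to B\SO(m+k)$ along a CDGA model of the classifying map $M\to B\SO(m+k)$ of the stabilized bundle $\tau_M\oplus\epsilon^k$. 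Since $k$ is odd, $H^*(B\SO(k);\mathbb{Q})=\mathbb{Q}[p_1,\ldots,p_s]$, so the kernel of the restriction $H^*(B\SO(m+k);\mathbb{Q})\to H^*(B\SO(k);\mathbb{Q})$ is generated by the Pontryagin classes $p_{s+1},p_{s+2},\ldots$ (and by the Euler class $e$ when $m+k$ is even). In the universal relative model these are precisely the classes hit by the differentials $dw_j$ of the fibre generators.

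Step 3 (applying the vanishing hypothesis). By stability of Pontryagin classes, $p_i(\tau_M\oplus\epsilon^k)=p_i(\tau_M)$; moreover $e(\tau_M\oplus\epsilon^k)=e(\tau_M)\cdot e(\epsilon^k)=0$ since $\epsilon^k$ is trivial of positive rank. The hypothesis $p_i(\tau_M)=0$ for $i\geq s+1$ therefore kills every differential in the pulled-back model, so the bundle is rationally trivial: $E\simeq_\mathbb{Q} M\times V_m(\mathbb{R}^{m+k})$. Consequently $\Gamma(M,E)_\mathbb{Q}\simeq\operatorname{Map}(M, V_m(\mathbb{R}^{m+k}))_\mathbb{Q}\simeq\prod_j\operatorname{Map}(M,K(\mathbb{Q},n_j))$, and the classical identification $\operatorname{Map}(M,K(\mathbb{Q},n))\simeq\prod_i K(\mathbb{Q},n-i)^{b_i(M)}$ shows that each path component is a product of Eilenberg--MacLane spaces whose type depends only on $\{b_i(M)\}$ and on the list $\{n_j\}$, hence only on the Betti numbers of $M$ and on $k$. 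The main obstacle I expect is step 2: producing an explicit Sullivan relative model of the universal Stiefel fibration $V_m(\mathbb{R}^{m+k})\to B\SO(k)\to B\SO(m+k)$ and matching its differentials with exactly the kernel of the restriction map on Pontryagin algebras, so that the vanishing of $p_i(\tau_M)$ for $i\geq s+1$ really does force the full rational triviality of $E\to M$.
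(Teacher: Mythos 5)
Your proposal is correct and follows the same overall strategy as the paper: Smale--Hirsch to reduce $\Imm(M,\mathbb{R}^{m+k})$ to sections of the $V_m(\mathbb{R}^{m+k})$-bundle $\Fr_m(\tau_M)$, an explicit relative Sullivan model showing this bundle is rationally trivial under the hypothesis $p_i(\tau_M)=0$ for $i\geq s+1$, and then Thom's computation of $\operatorname{Map}(M,K(\mathbb{Q},n))$ to get the product of Eilenberg--MacLane spaces depending only on the Betti numbers of $M$ and on $k$. Where you differ is in how the model is produced: the paper pulls back the Borel bundle $ESO(m)\times_{\SO(m)}\SO(m+k)/\SO(k)\to B\SO(m)$ along the classifying map of $\tau_M$ and uses Matsuo's model (with generators from $B\SO(m)$, $B\SO(k)$ and suspended $B\SO(m+k)$ classes, afterwards simplified by an explicit quasi-isomorphism), whereas you pull back the Koszul-type relative model of $B\SO(k)\to B\SO(m+k)$ along the classifying map of the stabilized bundle $\tau_M\oplus\epsilon^k$, using stability of Pontryagin classes and $e(\tau_M\oplus\epsilon^k)=0$. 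These are equivalent descriptions, since the Borel construction is precisely the homotopy pullback of $B\SO(m)\to B\SO(m+k)\leftarrow B\SO(k)$, and your route has the mild advantage of bypassing Matsuo's theorem; the paper's route has the advantage of giving the model directly over an arbitrary CGDA model of the base with the Pontryagin classes of $\tau_M$ (not of the stabilization) appearing as the differentials. The one step you assert without justification is the passage from rational triviality of the model to $\Gamma(M,E)_{\mathbb{Q}}\simeq\operatorname{Map}(M,V_m(\mathbb{R}^{m+k})_{\mathbb{Q}})$ componentwise: as in the paper, this needs the fiberwise rationalization of the bundle (trivial by Llerena's uniqueness, the paper's Proposition~\ref{lupton}) together with M{\o}ller's theorem comparing components of section spaces of a nilpotent fibration with those of its fiberwise rationalization; this is standard but should be cited to close the argument.
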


When the codimension, $k$, is even and $\geq 2$, the rational homotopy type of the path components of $Imm\left(M,\mathbb{R}^{m+k}\right)$ is more complicated to describe. In that case we obtain the following results.
\begin{thm}[Theorem \ref{caspair}]

Let $M$ be a simply connected $m$-manifold of finite type and $k$ an even integer. Then:
\begin{itemize}
\item
If $k\geq m+1$,  $Imm\left(M,\mathbb{R}^{m+k}\right)$ is connected and its rational homotopy type is of the form
 \[
 Imm\left(M, \mathbb{R}^{m+k}\right)\simeq_{\mathbb{Q}}Map\left(M ,S^k\right)\times K,
 \]
 where
 $Map\left(M ,S^k\right)$ is the mapping space of maps from $M$ into $S^k$ and $K$ is a product of Eilenberg-Maclane spaces which depend only on the Betti numbers of $M$.
 \item
 If $2\leq k\leq m$,  Suppose that $p_i\left(\tau_M\right)=0, \forall i\geq s$ then, the connected component of $f$ in $Imm\left(M,\mathbb{R}^{m+k}\right)$ has the rational homotopy type of a product
 \[
  Imm\left(M, \mathbb{R}^{m+k},f\right)\simeq_{\mathbb{Q}}Map\left(M ,S^k,\tilde{f}\right)\times K,
 \]
where
 $ Imm\left(M, \mathbb{R}^{m+k},f\right)$ is the component of  $Imm\left(M, \mathbb{R}^{m+k}\right)$ which contains $f$,\\
$ K$ is the product of Eilenberg-Maclane spaces which depends only on the Betti numbers of $M$,\\
 $Map\left(M ,S^k,\tilde{f}\right)$  is the path component of the  mapping space of $M$ in $S^k$ containing the map $\tilde{f}$ induced by the immersion $f$.
 \end{itemize}
\end{thm}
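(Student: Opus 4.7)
The plan is to reduce the question to a rational section-space calculation via the Smale-Hirsch theorem, and then to exploit the fact that, when $k$ is even, the Stiefel manifold $V_m(\mathbb{R}^{m+k})$ is rationally a product of spheres in which exactly one factor has even dimension, namely $S^k$. First I would apply Smale-Hirsch to identify $Imm(M,\mathbb{R}^{m+k})$, up to weak homotopy equivalence, with the section space $\Gamma(\mathcal{E}_M)$ of the fiber bundle $\mathcal{E}_M\to M$ whose fiber over $x$ is the space of injective linear maps $T_xM\hookrightarrow\mathbb{R}^{m+k}$. The path components of the immersion space match those of $\Gamma(\mathcal{E}_M)$, and a given immersion $f$ picks out a specific section whose image under the fiberwise projection onto the $S^k$-factor is the map $\tilde f$ appearing in the statement.

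Next I would compute the rational homotopy type of the fiber from the relative minimal model of $V_m(\mathbb{R}^{m+k})\to B\SO(k)\to B\SO(m+k)$. For $k$ even, the Euler class of $B\SO(k)$ provides a generator $e$ in degree $k$ with a companion $b$ in degree $2k-1$ and $db=e^{2}$ (encoding $e^{2}=0$ in $H^{*}(V_m(\mathbb{R}^{m+k});\mathbb{Q})$), while the Pontryagin classes $p_j$ of $B\SO(m+k)$ killed by restriction to $B\SO(k)$ contribute closed odd generators $a_j$ of degree $4j-1$ for $j\geq s+1$; when $m$ is even, the Euler class of $B\SO(m+k)$ contributes one further closed odd generator. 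Consequently
\[
V_m(\mathbb{R}^{m+k})\simeq_{\mathbb{Q}} S^{k}\times\prod_{j} S^{n_j},\qquad n_j\text{ odd}.
\]
With the explicit Sullivan model of $\Gamma(\mathcal{E}_M)$ promised in the abstract — a Haefliger-type model whose twists are controlled by the rational Pontryagin classes $p_j(\tau_M)$ — I would then establish the splitting. In the range $k\geq m+1$, the relevant twists $p_s(\tau_M)$ (of degree $2k$) and $p_j(\tau_M)$ for $j\geq s+1$ (of degree $\geq 2k+4$) all lie in degrees exceeding $m=\dim M$, hence vanish automatically; the model therefore untwists into a tensor product and one obtains
\[
\Gamma(\mathcal{E}_M)\simeq_{\mathbb{Q}} Map(M,S^{k})\times\prod_{j} Map(M,S^{n_j}).
\]
Each odd-sphere mapping space is rationally a product of Eilenberg-MacLane spaces whose homotopy ranks are determined by the Betti numbers of $M$, and together they form the factor $K$. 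Connectedness of $Imm(M,\mathbb{R}^{m+k})$ is immediate from $V_m(\mathbb{R}^{m+k})$ being $(k-1)$-connected while $\dim M=m\leq k-1$.

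For the case $2\leq k\leq m$, the Pontryagin classes $p_s(\tau_M)$ and $p_j(\tau_M)$ with $j\geq s+1$ may now be non-zero in $H^{*}(M;\mathbb{Q})$ and would create genuine couplings between $e$ and the odd generators in the model of $\Gamma(\mathcal{E}_M)$. The hypothesis $p_i(\tau_M)=0$ for $i\geq s$ is precisely what kills all these terms, so the model again splits as a tensor product. Restricting to the component of a given immersion $f$ corresponds to restricting to the component of $Map(M,S^{k})$ that contains the induced map $\tilde f$, yielding the factor $Map(M,S^{k},\tilde f)$; the odd-sphere factors continue to assemble into the product $K$ of Eilenberg-MacLane spaces.

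The main obstacle is setting up and controlling the explicit Sullivan model of $\Gamma(\mathcal{E}_M)$ with enough precision that the informal untwisting above becomes a genuine decomposition of differential graded algebras, in particular isolating the $S^{k}$-piece of the section-space model cleanly and verifying that, under the stated hypotheses on $\tau_M$, no residual twisting survives between the even generator $e$ and the odd generators of the fiber. In the range $k\geq m+1$ this is a straightforward degree count, but in the range $2\leq k\leq m$ the vanishing of the higher Pontryagin classes of $\tau_M$ enters in an essential way.
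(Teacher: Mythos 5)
Your proposal follows essentially the same route as the paper: Smale--Hirsch identification of $Imm(M,\mathbb{R}^{m+k})$ with the section space of the Stiefel bundle, the relative Sullivan model of that bundle coming from $B\SO(k)\to B\SO(m+k)$ with twisting given by the Pontryagin classes of $\tau_M$ and the degree-$k$ Euler generator, untwisting under the vanishing hypothesis (automatic by degree reasons when $k\geq m+1$), and Thom's splitting of the resulting mapping spaces into $Map(M,S^k)$ times Eilenberg--MacLane factors determined by the Betti numbers of $M$. The only real difference is cosmetic: the paper performs the untwisting at the level of the bundle itself (rational triviality via Proposition~\ref{prop:principal} and Corollary~\ref{trivial}, then fiberwise rationalization and M\o ller's theorem to pass to section spaces), whereas you propose to untwist directly inside a Haefliger-type model of the section space, which amounts to the same computation.
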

As the special case of this theorem we have.
\begin{thm}[Corollary~\ref{idem}]
If $H^k\left(M,\mathbb{Q}\right)=0$, then all the path connected of $Imm\left(M,\mathbb{R}^{m+k}\right)$ have the same rational homotopy type, and we can take $\tilde{f}$ to be the constant map.
\end{thm}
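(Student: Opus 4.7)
The plan is to invoke Theorems~\ref{casod} and~\ref{caspair} and then reduce to a statement about the path components of a mapping space. When $k$ is odd and non-zero, Theorem~\ref{casod} already describes every component of $\Imm(M,\mathbb{R}^{m+k})$ by the \emph{same} product of Eilenberg--MacLane spaces, so the conclusion is immediate. When $k\ge m+1$ the space is connected and the hypothesis $H^k(M,\mathbb{Q})=0$ is automatic. The substantive case is $k$ even with $2\le k\le m$, where Theorem~\ref{caspair} yields
\[
 \Imm(M,\mathbb{R}^{m+k},f)\simeq_{\mathbb{Q}} \operatorname{Map}(M,S^k,\tilde f)\times K,
\]
with $K$ independent of $f$. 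The corollary then reduces to proving that, under the hypothesis $H^k(M,\mathbb{Q})=0$, every path component of $\operatorname{Map}(M,S^k)$ has the same rational homotopy type as $\operatorname{Map}(M,S^k,\ast)$.

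To establish this I would work rationally and exploit the Postnikov fibration of $S^k$ for $k$ even, which is a principal $K(\mathbb{Q},2k-1)$-fibration
\[
 K(\mathbb{Q},2k-1)\longrightarrow S^k_{\mathbb{Q}}\longrightarrow K(\mathbb{Q},k)
\]
classified by the square of the fundamental class in $H^{2k}(K(\mathbb{Q},k);\mathbb{Q})$. Applying $\operatorname{Map}(M,-)$ produces a principal fibration
\[
 \operatorname{Map}(M,K(\mathbb{Q},2k-1))\longrightarrow\operatorname{Map}(M,S^k_{\mathbb{Q}})\longrightarrow\operatorname{Map}(M,K(\mathbb{Q},k)).
\]
Since $\pi_0\operatorname{Map}(M,K(\mathbb{Q},k))=H^k(M,\mathbb{Q})=0$, the base is connected, so the principal action of $\operatorname{Map}(M,K(\mathbb{Q},2k-1))$ on $\operatorname{Map}(M,S^k_{\mathbb{Q}})$ permutes all the path components transitively. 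Because $K(\mathbb{Q},2k-1)$ is a topological abelian group, so is $\operatorname{Map}(M,K(\mathbb{Q},2k-1))$, and translation by any element is a self-homotopy-equivalence. Consequently all components of $\operatorname{Map}(M,S^k_{\mathbb{Q}})$ are rationally equivalent to the component of the constant map, which is what is needed.

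The step I expect to be the main obstacle is justifying the \emph{principal} structure on the induced fibration of mapping spaces and the corresponding transitive action on components: one must check that the $K(\mathbb{Q},2k-1)$-action on $S^k_{\mathbb{Q}}$ coming from the Postnikov tower really induces a free, transitive action of $\operatorname{Map}(M,K(\mathbb{Q},2k-1))$ on each fibre of $\operatorname{Map}(M,S^k_{\mathbb{Q}})\to\operatorname{Map}(M,K(\mathbb{Q},k))$. A more hands-on alternative, should this become delicate, is to argue directly on Sullivan models via Haefliger's construction: in the minimal model $(\Lambda(x,y),\,dy=x^2)$ of $S^k$, the vanishing of $H^k(M,\mathbb{Q})$ allows one to homotope any DGA map representing $\tilde f$ to one sending $x$ to $0$, after which the explicit model of $\operatorname{Map}(M,S^k,\tilde f)$ coincides with that of $\operatorname{Map}(M,S^k,\ast)$.
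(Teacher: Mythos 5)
Your proposal is correct in substance but its main line of argument is genuinely different from the paper's. The paper does exactly what you describe only as a fallback: it works with Sullivan models, taking the model $(\Lambda(x,y),dy=x^2)$ of $S^k$ and a finite-dimensional model $(A,d_A)$ of $M$, homotoping the representative $\phi$ of $\tilde f$ so that $\phi(x)=0$ (possible precisely because $H^k(M,\mathbb{Q})=0$), and then performing the change of variables $y'=y-\phi(y)$, which is a DGA automorphism over $A$ because $\phi(y)$ becomes a cocycle; after this the input data for the Haefliger-type model of $\operatorname{Map}(M,S^k,\tilde f)$ coincides with that for the constant map, whence all components agree rationally. Your primary route instead argues geometrically with the rational Postnikov fibration $K(\mathbb{Q},2k-1)\to S^k_{\mathbb{Q}}\to K(\mathbb{Q},k)$ and the induced principal structure on $\operatorname{Map}(M,S^k_{\mathbb{Q}})\to\operatorname{Map}(M,K(\mathbb{Q},k))$: since the base is connected when $H^k(M,\mathbb{Q})=0$, path-lifting plus the torsor property of vertical homotopy classes of lifts (a torsor over $H^{2k-1}(M,\mathbb{Q})$, by classical obstruction theory for principal fibrations) shows that $\pi_0$ of the group acts transitively on $\pi_0$ of the total space, and translations are self-equivalences, so all components of $\operatorname{Map}(M,S^k_{\mathbb{Q}})$ are equivalent. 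This works, and it is arguably more conceptual (it exhibits the components as permuted by $H^{2k-1}(M,\mathbb{Q})$), but it needs the two points you flag made precise: a strict (or sufficiently coherent) principal action — easiest via simplicial abelian group models of the Eilenberg--MacLane spaces, so that $S^k_{\mathbb{Q}}$ is a genuine pullback of a principal $K(\mathbb{Q},2k-1)$-bundle — and note that freeness and strict fibrewise transitivity are neither available nor needed, only transitivity on vertical homotopy classes; you also need the standard fact that each component of $\operatorname{Map}(M,S^k)$ rationalizes to the corresponding component of $\operatorname{Map}(M,S^k_{\mathbb{Q}})$ to transfer the conclusion back. The paper's model-theoretic argument buys brevity and stays entirely inside the machinery already used to prove Theorems~\ref{casod} and~\ref{caspair}; the reduction of the corollary to the mapping-space statement via those theorems is the same in both approaches.
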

As a consequence of Theorem~\ref{casod} and Theorem~\ref{caspair} we have the following corollary 
\begin{thm} [Corollary~\ref{betti}]

If $M$ is simply connected manifold of dimension m and k an integer such that $k\geq 2$. Then the rational Betti numbers of each component of $Imm\left(M, \mathbb{R}^{m+k}\right)$ have polynomial growth.
\end{thm}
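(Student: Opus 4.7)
The plan is to use Theorems~\ref{casod} and~\ref{caspair} to reduce the claim to polynomial growth for two explicit kinds of factors. In the odd codimension case $k=2s+1$, Theorem~\ref{casod} identifies each component of $Imm(M,\mathbb{R}^{m+k})$ rationally with a finite product of Eilenberg--MacLane spaces $K(\mathbb{Q},n_i)$, the integers $n_i$ depending only on $k$ and on the Betti numbers of $M$. In the even codimension case $k\geq 2$, Theorem~\ref{caspair} provides a rational splitting of each component as $Map(M,S^k,\tilde f)\times K$, where $K$ is again a finite product of Eilenberg--MacLane spaces. By the K\"unneth formula it suffices to establish polynomial growth of Betti numbers for each factor separately.

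The Eilenberg--MacLane factor is immediate: $H^*(K(\mathbb{Q},n);\mathbb{Q})$ is a polynomial algebra on one generator in degree $n$ when $n$ is even and an exterior algebra on one generator when $n$ is odd. A finite product of such spaces therefore has cohomology isomorphic to a finitely generated polynomial-exterior algebra whose graded dimensions grow polynomially in the degree.

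The nontrivial case is the mapping space factor $Map(M,S^k,\tilde f)$. I would produce a finitely generated Sullivan model for it via the Haefliger construction. Since $M$ is a simply connected finite-dimensional manifold of finite type, $H^*(M;\mathbb{Q})$ is finite-dimensional and $M$ admits a finite-dimensional cdga model $A$. The Sullivan minimal model of $S^k$ is $(\Lambda(x,y),d)$ with $|x|=k$, $|y|=2k-1$, $dy=x^2$ when $k$ is even, and $(\Lambda(x),0)$ with $|x|=k$ when $k$ is odd, so its generating space $V$ is finite-dimensional. Combining these two finite-dimensional ingredients, the Haefliger model yields a Sullivan model $(\Lambda W,D)$ of $Map(M,S^k,\tilde f)$ whose generating space $W$ is finite-dimensional, built from $A^*\otimes V$ in positive degrees.

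Given such a finitely generated Sullivan model, the polynomial bound is formal: writing $W=W^{\mathrm{even}}\oplus W^{\mathrm{odd}}$, the underlying algebra $\Lambda W$ is the tensor product of a finitely generated polynomial algebra with a finite-dimensional exterior algebra, so $\dim(\Lambda W)_n$ grows polynomially in $n$. Since Betti numbers are dimensions of subquotients of $\Lambda W$, they inherit the same polynomial bound, and K\"unneth then yields the estimate for the full component. The main obstacle I expect is verifying the availability of a finitely generated Sullivan model for the specific component containing $\tilde f$, and, if necessary, reducing the general $Imm(M,\mathbb{R}^{m+k})$ to the setting covered by the Pontryagin class hypotheses that appear in Theorems~\ref{casod} and~\ref{caspair}.
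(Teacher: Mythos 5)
Your proposal follows essentially the same route as the paper, which states this corollary as a direct consequence of Theorems~\ref{casod} and~\ref{caspair}: each component is rationally a finite product of Eilenberg--MacLane spaces, possibly times a component of $Map\left(M,S^k\right)$, and both factors admit finitely generated Sullivan models, hence polynomially growing Betti numbers. The caveat you flag about the Pontryagin-class hypotheses is equally present in the paper (which offers no separate argument for the corollary), and your observation that a finitely generated Haefliger-type model of the section space exists even without rational triviality, using the bundle model of Proposition~\ref{prop:principal}, is precisely how that caveat would be removed.
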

From this result we deduce the following.
\begin{thm}[Remark~\ref{alp}]
Let $M$ be a simply connected m-manifold of finite type with Euler characteristic $\chi\left(M\right)\leq -2$, and $k\neq 0$ an integer. Then if $k\geq m+1$, the Betti numbers of the space of smooth embedding of $M$ into $\mathbb{R}^{m+k}$, $Emb\left(M,\mathbb{R}^{m+k}\right)$, have exponential growth. 
\end{thm}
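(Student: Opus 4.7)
The strategy is to compare $Emb(M,\mathbb{R}^{m+k})$ with an explicit equivariant mapping space via Haefliger's metastable embedding theorem, and then to leverage the hypothesis $\chi(M)\le -2$ through the elliptic/hyperbolic dichotomy of rational homotopy theory, using the polynomial growth of $Imm$ (Corollary~\ref{betti}) as a ``baseline'' against which the extra embedding-theoretic cohomology must register.

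First, since $k\ge m+1$ is well inside the metastable range $2(m+k)\ge 3m+3$, Haefliger's theorem gives a homotopy equivalence
\[
Emb\bigl(M,\mathbb{R}^{m+k}\bigr)\;\simeq\;Map_{\mathbb{Z}/2}\bigl(\widetilde{C_2(M)},\,S^{m+k-1}\bigr),
\]
where $\widetilde{C_2(M)}$ is the Fulton--MacPherson compactification of the ordered $2$-point configuration space of $M$ and $\mathbb{Z}/2$ swaps the two points. Because $M$ is simply connected of finite type, a Lambrechts--Stanley CDGA model of $\widetilde{C_2(M)}$ is available, and combining it with the standard Sullivan model of $S^{m+k-1}$ produces an explicit Sullivan model of each path component of the embedding space.

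Second, extract from $\chi(M)\le -2$ the fact that this mapping space is rich. For $M$ closed one has $\chi(\widetilde{C_2(M)})=\chi(M)^2-\chi(M)\ge 6$, and more generally the rational cohomology of $\widetilde{C_2(M)}$ grows at least quadratically in $H^\ast(M;\mathbb{Q})$. Pairing these cohomology classes with the generator in $\pi_{m+k-1}(S^{m+k-1})\otimes\mathbb{Q}$ produces infinitely many rational homotopy generators of $Map_{\mathbb{Z}/2}(\widetilde{C_2(M)},S^{m+k-1})$, arranged in a minimal Sullivan model that carries strictly more generators than the immersion model of Theorem~\ref{caspair}. One would then apply the F\'elix--Halperin--Thomas dichotomy: a simply connected finite-type space is either rationally elliptic (with $\sum_n \dim \pi_n\otimes\mathbb{Q}<\infty$) or rationally hyperbolic (in which case its Betti numbers grow exponentially). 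Having infinitely many rational homotopy generators forces the hyperbolic case, and via Haefliger's equivalence the exponential growth is inherited by each component of $Emb(M,\mathbb{R}^{m+k})$.

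The main obstacle is the middle step: verifying that the generators of the Lambrechts--Stanley model of $\widetilde{C_2(M)}$ produced from $\chi(M)\le -2$ genuinely survive as free generators in the Sullivan model of $Map_{\mathbb{Z}/2}(\widetilde{C_2(M)},S^{m+k-1})$, rather than being killed by relations involving the diagonal class or by Poincar\'e duality on $H^\ast(M;\mathbb{Q})$. This is where the comparison with the polynomial-growth splitting $Imm(M,\mathbb{R}^{m+k})\simeq_{\mathbb{Q}}Map(M,S^k)\times K$ from Theorem~\ref{caspair} becomes decisive: it identifies exactly the ``formal'' part of the model, so any surviving generators must come from genuinely embedding-theoretic obstructions, and $\chi(M)\le -2$ (as opposed to merely $\chi(M)\ne 0$) provides enough excess to guarantee that at least one such generator survives and triggers hyperbolicity.
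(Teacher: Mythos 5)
Your route has two genuine breakdowns, and it also bypasses the actual content of the paper's argument. First, the space-level metastable equivalence you invoke is not an equivalence for $Emb\left(M,\mathbb{R}^{m+k}\right)$ itself: Haefliger-type results (and the Goodwillie--Weiss refinements used in the literature) identify the space of embeddings \emph{modulo immersions}, $\overline{Emb}\left(M,\mathbb{R}^{m+k}\right)$, i.e.\ the homotopy fiber of $Emb\to Imm$, with an equivariant mapping/section space built from the two-point configuration space; the tangential (derivative) data is exactly what is divided out. Since $Imm\left(M,\mathbb{R}^{m+k}\right)$ is far from rationally trivial (that is the content of Theorems~\ref{casod} and~\ref{caspair}), writing $Emb\simeq Map_{\mathbb{Z}/2}\bigl(C_2(M),S^{m+k-1}\bigr)$ is false, so the rest of the argument is computing the wrong space. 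Second, your concluding step misuses the F\'elix--Halperin--Thomas dichotomy: that dichotomy applies to simply connected spaces with \emph{finite-dimensional} rational cohomology (none of these mapping, section or embedding spaces qualify), and rational hyperbolicity asserts exponential growth of the ranks of the homotopy groups (equivalently of the homology of the loop space), not of the Betti numbers of the space itself --- a wedge of two spheres is hyperbolic with finite total cohomology. So ``infinitely many rational homotopy generators'' cannot, by itself, yield exponential growth of Betti numbers; indeed the components of $Imm$ have plenty of homotopy and only polynomial growth. Finally, the quantitative heart of the matter --- that $\chi\left(M\right)\leq -2$ forces exponential growth --- is precisely the theorem of Arone--Lambrechts--Pryor, and your sketch (``enough excess to guarantee that at least one generator survives'') does not reprove it.

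For comparison, the paper's proof is short and goes the other way around: it quotes the Arone--Lambrechts--Pryor theorem that $H^{\ast}\bigl(\overline{Emb}\left(M,\mathbb{R}^{m+k}\right);\mathbb{Q}\bigr)$ grows exponentially when $\chi\left(M\right)\leq -2$ and $k\geq m+1$, and combines it with Corollary~\ref{betti} (polynomial growth for each component of $Imm$, hence also for its loop space, since these components are rationally products of Eilenberg--MacLane spaces and mapping spaces into spheres). In the fibration sequence $\overline{Emb}\to Emb\to Imm$, looped to $\Omega Imm\to\overline{Emb}\to Emb$, the Serre spectral sequence gives
\[
\dim H^{n}\bigl(\overline{Emb}\bigr)\;\leq\;\sum_{p+q=n}\dim H^{p}\bigl(Emb\bigr)\cdot\dim H^{q}\bigl(\Omega\, Imm\bigr),
\]
so polynomial growth of $H^{\ast}\left(Emb\right)$ would force polynomial growth of $H^{\ast}\bigl(\overline{Emb}\bigr)$, a contradiction. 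If you want a correct argument, this transfer step through $Imm$ is unavoidable; your proposal removes it only by means of the incorrect identification of $Emb$ with the equivariant mapping space.
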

In \cite{hirsh}, M.W Hirsh proves that $Imm\left(M,\mathbb{R}^{m+k}\right)$ has the same homotopy type of the space of sections of a bundle $\Fr_m\left(\tau_M\right)$(see \ref{stiefelbundle} for the definition of $\Fr_m\left(\tau_M\right))$ whose fiber is the Stiefel manifold $V_m\left(\mathbb{R}^{m+k}\right)$ of m-frames in $\mathbb{R}^{m+k}$. That is,
\[
Imm\left(M,\mathbb{R}^{m+k}\right)\simeq \Gamma\left(\Fr_m\left(\tau_M\right)\right).
\] 

The main ingredients of the proof of Theorem~\ref{casod} and Theorem~\ref{caspair} is  the rational model of $\Fr_m\left(\xi\right)$(see Propositon~\ref{prop:principal} for its construction) from which we deduce the following result.
\begin{thm}[Proposition~\ref{trivial}]
Let $\xi$ be a vector bundle of rank $m$ and $p_i\left(\xi\right), 1\leq i\leq \frac{m}{4}$, its Pontryagin classes. If $p_i\left(\tau_M\right)=0 \text{ }\forall \text{ }i\geq \frac{k}{2}$, then $\Fr_m\left(\xi\right)$ is rationally trivial.
 \end{thm}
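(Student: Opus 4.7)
The plan is to exploit the explicit rational model of $\Fr_m(\xi)\to M$ constructed in Proposition~\ref{prop:principal}. That model has the form of a relative Sullivan algebra $(A_M\otimes \Lambda V,\, D)$, where $(A_M,d_M)$ is a CDGA model of $M$, $(\Lambda V,d_F)$ is the minimal Sullivan model of the fibre $V_m(\mathbb{R}^{m+k})$, and $D$ extends $d_M$ on $A_M$ by $Dy=d_Fy+\tau(y)$ on a generator $y\in V$, with twisting term $\tau(y)\in A_M^{+}\otimes\Lambda V$. To prove that the bundle is rationally trivial it is enough to arrange, up to an isomorphism of CDGAs, that $\tau\equiv 0$; the model then splits as $(A_M,d_M)\otimes(\Lambda V,d_F)$, which is precisely rational fibre-homotopy triviality.

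The first step is to identify the twisting terms $\tau(y)$ in terms of characteristic classes of $\xi$. View $\Fr_m(\xi)\to M$ as the pullback of the universal fibration
\[
V_m(\mathbb{R}^{m+k})\longrightarrow BO(k)\longrightarrow BO(m+k)
\]
along the composition $M\to BO(m)\to BO(m+k)$ classifying $\xi\oplus\underline{\mathbb{R}^k}$. In the rational Serre spectral sequence of the universal fibration, the transgressive generators of $H^{*}(V_m(\mathbb{R}^{m+k});\mathbb{Q})$ must hit every class in the kernel of the edge map $H^{*}(BO(m+k);\mathbb{Q})\to H^{*}(BO(k);\mathbb{Q})$, and this kernel is generated by the Pontryagin classes $p_i$ with $\lfloor k/2\rfloor<i\leq\lfloor(m+k)/2\rfloor$ (together, in the oriented case, with the top Euler class when $m+k$ is even). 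By naturality the corresponding transgressions in the model of $\Fr_m(\xi)$ are the pullbacks of these classes to $M$; the Whitney sum formula gives $p_i(\xi\oplus\underline{\mathbb{R}^k})=p_i(\xi)$, while the Euler class of a positive-rank trivial bundle vanishes, so $e(\xi\oplus\underline{\mathbb{R}^k})=e(\xi)\cdot e(\underline{\mathbb{R}^k})=0$.

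Under the hypothesis $p_i(\xi)=0$ for $i\geq k/2$, every cohomology class $[\tau(y)]\in H^{*}(M;\mathbb{Q})$ is therefore zero. I would then finish by an inductive change of generators, processing $V$ in order of increasing degree: write $\tau(y)=d_Mz$ for some $z\in A_M$ and replace $y$ by $y-z$ to obtain a generator with $Dy=0$, carrying the resulting corrections into the differentials of higher-degree generators where they remain cohomologically trivial and can be handled by the same recipe. Once all $\tau(y)$ have been killed the model becomes the tensor product $(A_M,d_M)\otimes(\Lambda V,d_F)$, giving $\Fr_m(\xi)\simeq_{\mathbb{Q}} M\times V_m(\mathbb{R}^{m+k})$. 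The most delicate point is verifying that the iterative change of generators is well-defined and terminates: the key input is that $\tau(y)$ involves only lower-degree generators of $V$, so the induction on degree keeps the successive obstructions cohomologically trivial in $A_M\otimes\Lambda V$ and allows them to be killed stepwise.
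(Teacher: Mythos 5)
Your proposal is correct and follows essentially the same route as the paper: the corollary is read off directly from the explicit model of Proposition~\ref{prop:principal}, whose only twisting terms are the cocycles $p_i\left(\xi\right)$ with $i\geq s$ (together with the purely fiberwise term $e_k^2$ when $k$ is even), so the hypothesis makes the relative model split as $\left(A,d_A\right)\otimes\left(\Lambda V,\bar{d}\right)$. Your extra change-of-generators step, handling the case where the representing cocycles are exact rather than literally zero, is a correct and standard refinement that the paper leaves implicit.
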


\textbf{Outline of the paper}
\begin{enumerate}
\item[-] In Section 2 we will construct the framed bundle associated to a vector bundle. We will start with the construction of the principal bundle associated to a vector bundle, afterward we will construct a framed bundle associated to a vector bundle and at the end we will prove that the framed framed bundle is a pullback of  Borel bundle.
\item[-]  In Section 3  we will construct the model of framed bundle. We will start with a review of the construction of the model of bundle, later we recall the model of Borel bundle in general case. We will prove that under certains conditions this bundle is rational trivial.
\item[-]  In Section 4 we will prove Theorem~\ref{casod}, and Theorem~\ref{caspair} and we will prove the polynomial growth of the space of embeddings. We will start this section with a quick review of the Small Hirsch theorem.
\end{enumerate}

\textbf{Acknowlegment}.\\ The present work is a part of my thesis. I would like to thank my advisor, Pascal Lambrechts, for making it possible throughout his advices
and encouragements. I also thank Yves Felix for suggesting that Corollary~\ref{idem} is  true without a codimension condition.
\section{Framed bundle associated to a vector bundle}

In this section we  study the framed bundle associated to a vector bundle. First we recall its construction and give some of its property which allow us to identify it with a pullback  of  certain Borel bundle. 
\subsection{Generality}
\label{stiefelbundle}
We start with some generality.
\subsubsection{$GL\left(m\right)$ prinicipal bundle}
 Let $\xi=\left(E,\pi,B\right)$ be a real  vector bundle of rank $m$. For $b\in B$, we denote by $E_b\xi$ the fiber of $\xi$ over $b $:
\[
E_b\xi:=\pi^{-1}\left(b\right).
\]
We associate to $\xi$ the $GL\left(m\right)$-principal bundle $P\xi=\left( P_m\xi,p,B\right)$. Where $P_m\xi$  is the set of pairs $\left(b,\left(v_1\cdots,v_m\right)\right)$ with $b\in B$ and $\left(v_1,\cdots,v_m\right)$ is an $m-$frames in $E_b$. $P_m\xi$ is topologize as a subspace of  the Whitney sum $E\oplus\cdots\oplus E=mE$. $p$ denote the  projection map from $P_m\xi$ to $B$ , which send the frame $\left(b,\left(v_1\cdots,v_m\right)\right)$  on $b$. Note that $GL\left(m\right)$ acts freely on the right of $P_m\xi$. The action is defined as follow. Let $g=\{g_{ij}\}\in GL\left(m\right)$ and $\left(b,\left(v_1\cdots,v_m\right)\right)\in P_m\xi$.
\[
\left(b,\left(v_1\cdots,v_m\right)\right)g=\left(b,\left(w_1,\cdots,w_m\right)\right)
\]
where
\[
w_j=\sum_{i}v_ig_{ij}.
\]
This make $P\xi=\left( P_m\xi,p,B\right)$ into a principal bundle over $B$.

\subsubsection{Associated framed bundle}
Let $W$ be a vector space of dimension $m+k$. We denote by $V_mW$ the space of $m$ frames in $W$. $V_mW$ is a right $GL\left(m\right)$ space. 
By consequence $GL\left(m\right)$ acts on  $P_m\xi\times V_mW$. Set by $P_m\xi\times_{GL\left(m\right)} V_mW$  the orbit of $P_m\xi\times V_mW$ under this action, and
\[
q: P_m\xi\times V_mW\longrightarrow P_m\xi\times_{GL\left(m\right)} V_mW
\]
the corresponding projection. $q$ determine a map 

\[
\rho:P_m\xi\times_{GL\left(m\right)} V_mW\longrightarrow B
\]
via the commutative diagram

 \[
 \xymatrix{
  P_m\xi\times V_mW   \ar[r]^q \ar[d]_{pr_1}  &P_m\xi\times_{GL\left(m\right)}V_mW \ar[d]^\rho \\
    P_m\xi \ar[r]^p& B
  }
  \]
  
\begin{prop}\cite[Proposition~1, p~198]{halperin2}
There is an unique topology on $P_m\xi\times_{GL\left(m\right)}V_mW$ such that  $\Fr_m\left(\xi\right)=\left(P_m\xi\times_{GL\left(m\right)}V_mW,\rho,B\right)$ is a locally trivial bundle with fiber $V_mW$.
\end{prop}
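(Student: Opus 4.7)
The plan is to construct the topology on $P_m\xi \times_{GL(m)} V_m W$ via local trivializations of the underlying principal bundle $P\xi$, exactly as one builds any associated fibre bundle, and then verify the gluing is consistent.

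First I would pick a trivializing open cover $\{U_\alpha\}_{\alpha \in A}$ of $B$ for the principal bundle $P\xi$, with $GL(m)$-equivariant local trivializations
\[
\phi_\alpha \colon p^{-1}(U_\alpha) \xrightarrow{\;\cong\;} U_\alpha \times GL(m),
\]
and corresponding transition cocycles $g_{\alpha\beta} \colon U_\alpha \cap U_\beta \to GL(m)$. Over each $U_\alpha$, the diagonal $GL(m)$-action on $p^{-1}(U_\alpha) \times V_m W \cong U_\alpha \times GL(m) \times V_m W$ has as a natural set of orbit representatives the subset $U_\alpha \times \{e\} \times V_m W$, so the quotient yields a canonical bijection
\[
\psi_\alpha \colon \rho^{-1}(U_\alpha) \xrightarrow{\;\cong\;} U_\alpha \times V_m W.
\]

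Next I would declare the topology on $P_m\xi \times_{GL(m)} V_m W$ to be the unique one making each $\psi_\alpha$ a homeomorphism. To see this is well defined I must check compatibility on overlaps: a direct computation shows
\[
\psi_\beta \circ \psi_\alpha^{-1} \colon (U_\alpha \cap U_\beta) \times V_m W \to (U_\alpha \cap U_\beta) \times V_m W, \qquad (b, v) \mapsto \bigl(b,\, v \cdot g_{\alpha\beta}(b)^{-1}\bigr),
\]
which is a homeomorphism because the continuous left action $V_m W \times GL(m) \to V_m W$ combined with the continuous cocycle $g_{\alpha\beta}$ produces a continuous map, and its inverse is of the same form. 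Hence the local charts glue to a single topology. Under this topology $\rho$ is continuous and locally a projection onto the first factor, so $\Fr_m(\xi)$ is locally trivial with fibre $V_m W$. One also checks that the quotient map $q$ is continuous: on $p^{-1}(U_\alpha) \times V_m W$ it reads $((b,h), v) \mapsto (b, v \cdot h^{-1})$ in the charts $\phi_\alpha$ and $\psi_\alpha$, which is manifestly continuous.

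For uniqueness, suppose $\tau$ is any topology on $P_m\xi \times_{GL(m)} V_m W$ making $\rho$ a locally trivial bundle with fibre $V_m W$. By refining the cover I may assume the local trivializations of $\tau$ are defined on the same $U_\alpha$; the change of chart with my $\psi_\alpha$ is then a fibrewise bijection over $U_\alpha$ that commutes with $\rho$, and both sides are identified via the same equivariant structure inherited from $P\xi$, so the identity is a homeomorphism between the two topologies. The main subtlety in the whole argument is the book-keeping in the third step, namely producing the correct cocycle formula and verifying that the two candidate topologies on the overlaps really agree as sets of orbit classes; everything else is a formal application of the standard associated-bundle construction.
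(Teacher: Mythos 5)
Your construction of the topology is fine and is the standard route: since the paper itself gives no argument (it simply cites Greub--Halperin--Vanstone), the chart-by-chart definition from trivializations of $P\xi$, together with the cocycle computation $\psi_\beta\circ\psi_\alpha^{-1}(b,v)=(b,v\cdot g_{\alpha\beta}(b)^{-1})$ and the continuity of $q$, correctly produces a topology for which $\rho$ is locally trivial with fibre $V_mW$. (It would be worth adding one line showing this topology is in fact the quotient topology induced by $q$ from $P_m\xi\times V_mW$, which also disposes of the apparent dependence on the chosen cover.)

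The uniqueness step, however, has a genuine gap. From a competing topology $\tau$ you only get \emph{some} homeomorphisms $\rho^{-1}(U_\alpha)\cong U_\alpha\times V_mW$ over $U_\alpha$; nothing forces them to be ``identified via the same equivariant structure inherited from $P\xi$'', and a fibrewise bijection commuting with the projections need not be continuous, so you cannot conclude that the identity map is a homeomorphism between the two topologies. Indeed, as literally stated (``unique topology such that $\rho$ is a locally trivial bundle with fibre $V_mW$'') uniqueness fails: already for $\xi$ trivial (or $B$ a point) one may transport the canonical topology of $B\times V_mW$ along a bijection which is a fixed discontinuous self-bijection of $V_mW$ in every fibre; the resulting topology is different, yet the composite still serves as a trivialization, so the bundle condition is again satisfied. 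The uniqueness in the cited reference is relative to an additional compatibility requirement with the projection $q\colon P_m\xi\times V_mW\to P_m\xi\times_{GL(m)}V_mW$ (in the topological setting: $q$ continuous and open, equivalently the topology is the quotient topology; in GHV, smoothness of $q$). So to repair the argument you should either prove uniqueness among topologies for which $q$ is continuous and open (a continuous open surjection is a quotient map, which pins the topology down), or explicitly characterize your chart topology as the quotient topology and state the uniqueness in that form.
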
  

\begin{defn}
$\Fr_m\left(\xi\right)$ is called framed bundle with fiber $V_mW$  associated to $\xi$.
\end{defn}
Denote by  $P_0=\left(EGL\left(m\right),p_0,BGL\left(m\right)\right)$ the universal $GL\left(m\right)$ principal bundle, where $EGL\left(m\right)$ is a contractile space and $BGL\left(m\right)=\frac{EGL\left(m\right)}{GL\left(m\right)}$.  By the theorem of classification of principal bundle there exist  a map
\[
f : B\longrightarrow BGL\left(m\right)
\]
 such that 
$P_m\xi$ is the pullback of $P_0$ via $f$. $f$ is called the classifying map. Consider the Borel bundle $ \left(EGL\left(m\right)\times_{GL\left(m\right)}V_mW,\tilde{p}_0,BGL\left(m\right)\right)$ with fiber $V_mW$  associated to $P_0=\left(EGL\left(m\right),p_0,BGL\left(m\right)\right)$ . Then we have the following result.

\begin{prop}
$\Fr_m\left(\xi\right)$ is the pullback of $ \left(EGL\left(m\right)\times_{GL\left(m\right)}V_mW,\tilde{p}_0,BGL\left(m\right)\right)$ via $f$.
\end{prop}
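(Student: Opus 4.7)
The plan is to produce an explicit homeomorphism between $\Fr_m(\xi)$ and the pullback $f^{*}\bigl(EGL(m)\times_{GL(m)} V_mW\bigr)$, using the canonical bundle map over $f$, and then check the universal property of the pullback.

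First I would recall that since $P_m\xi$ is the pullback $f^{*}(EGL(m))$, the classifying map $f$ is covered by a $GL(m)$-equivariant bundle map $\tilde f\colon P_m\xi\to EGL(m)$ which restricts to a bijection on each fibre. Tensoring with the identity on $V_mW$ gives a $GL(m)$-equivariant map $\tilde f\times \mathrm{id}\colon P_m\xi\times V_mW\to EGL(m)\times V_mW$; passing to $GL(m)$-orbits, this descends to
\[
\bar f\colon P_m\xi\times_{GL(m)} V_mW\longrightarrow EGL(m)\times_{GL(m)} V_mW .
\]
Paired with $\rho$, it yields a continuous map
\[
\Phi\colon \Fr_m(\xi)\longrightarrow B\times_{BGL(m)}\bigl(EGL(m)\times_{GL(m)} V_mW\bigr),\qquad [x,v]\longmapsto \bigl(p(x),[\tilde f(x),v]\bigr),
\]
which is the candidate pullback homeomorphism.

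Next I would verify bijectivity of $\Phi$ by exploiting the fact that $\tilde f$ is fibrewise a $GL(m)$-equivariant bijection. For injectivity, if $\Phi([x,v])=\Phi([x',v'])$ then $p(x)=p(x')$ and there exists $g\in GL(m)$ with $\tilde f(x')=\tilde f(x)g$ and $v'=g^{-1}v$; applying the fibrewise bijectivity of $\tilde f$ over the common base point gives $x'=xg$, whence $[x,v]=[x',v']$. For surjectivity, an element $\bigl(b,[e,v]\bigr)$ of the pullback satisfies $p_0(e)=f(b)$, and the universal property of $P_m\xi=f^{*}(EGL(m))$ produces a unique $x\in P_m\xi$ with $p(x)=b$ and $\tilde f(x)=e$, so $\Phi([x,v])=\bigl(b,[e,v]\bigr)$.

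Finally I would upgrade $\Phi$ to a homeomorphism by a local triviality argument. Both sides are locally trivial bundles over $B$ with fibre $V_mW$: the left one by the preceding proposition, and the right one as a pullback of a locally trivial bundle. Over any trivialising open $U\subset B$ (small enough that both $P_m\xi|_U$ and $f^{*}(EGL(m)\times_{GL(m)}V_mW)|_U$ are products), the map $\Phi|_U$ is a fibrewise bijection between products $U\times V_mW$ induced by a choice of section; continuity of the inverse follows from the continuity of this section and of $\tilde f$. The main (and essentially only) obstacle is the bookkeeping of the $GL(m)$-actions when combining the two $GL(m)$-equivariant data $\tilde f$ and $\mathrm{id}_{V_mW}$, and verifying that the induced map on orbit spaces is well behaved locally; once the trivialisations are chosen coherently this is straightforward.
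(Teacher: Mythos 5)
Your proof is correct, but it takes a different route from the paper. The paper argues abstractly via the universal property of the pullback: starting from the fact that $P_m\xi$ is the pullback of $EGL\left(m\right)$ along $f$, it asserts that for any space $T'$ equipped with compatible maps to $B$ and to $EGL\left(m\right)\times_{GL\left(m\right)}V_mW$ there is a unique map into $P_m\xi\times_{GL\left(m\right)}V_mW$, writing down the bundle map $F'\left(\left[x,v\right]\right)=\left[F\left(x\right),v\right]$ covering $f$; it never constructs an explicit inverse or discusses topology, so it is essentially a diagrammatic verification (and a rather terse one). You instead build the canonical comparison map $\Phi$ into the fibre product $B\times_{BGL\left(m\right)}\bigl(EGL\left(m\right)\times_{GL\left(m\right)}V_mW\bigr)$ and prove directly that it is a bijection, using fibrewise bijectivity of the covering map $\tilde f$, and then a local-triviality argument for continuity of the inverse. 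Your approach buys concreteness: it makes explicit the standard fact that the associated-bundle construction commutes with pullback, $f^{*}\left(EG\times_{G}F\right)\cong\left(f^{*}EG\right)\times_{G}F$, and it actually supplies the point-set details (injectivity, surjectivity, homeomorphism) that the paper's universal-property argument leaves implicit. The paper's approach buys brevity and avoids the local trivialization bookkeeping, since uniqueness of pullbacks does the work. Two minor points to tidy in your write-up: fix a single convention for the balanced product (you write $v'=g^{-1}v$, which presumes a left action on $V_mW$, whereas the paper treats $V_mW$ as a right $GL\left(m\right)$-space with the diagonal right action), and in the last paragraph spell out that over a trivializing $U$ the fibrewise comparison is translation by a continuous $GL\left(m\right)$-valued function obtained from local sections, which is what makes the inverse continuous.
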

   \begin{proof}
 Since $P\xi=\left( P_m\xi,p,B\right)$ is  the pullback of $P_0=\left(EGL\left(m\right),p_0,BGL\left(m\right)\right)$ via $g$. This means that, if  $T$ is a space and $p_1: T\longrightarrow EGL(m), p_2: T\longrightarrow B$ are maps such that 
 \[
 p_0\circ p_1=g\circ p_2
 \]
 then there is a unique map:
 
 \[
  \Phi: T\longrightarrow P_m\xi
  \]
  such that the following diagram is commutative  
   \[
 \xymatrix{
 T \ar@/^/[rrd] ^{p_1}\ar@/_/[rdd]_{p_2} \ar@{.>}[rd]^{\Phi} \\
    &P_{m}\xi \ar[r]^F \ar[d]^p  & EGL\left(m\right) \ar[d]^{p_0 }\\
    &B\ar[r]_f  & BGL\left(m\right)
  }
  \]
  Therefore, if $T'$ is a space, 
  \[
  p_1':T\longrightarrow EGL\left(m\right)\times_{GL\left(m\right)}V_mW
  \] 
  and
  \[
  p_2':T\longrightarrow B
  \]
  two  maps such that 
  \[
  \tilde{p}_0\circ p_1'=g\circ p_2' 
  \]
   there is a unique map
   \[
\Phi': T'\longrightarrow P_m\xi\times_{GL\left(m\right)}V_mW
\]
 such that the following diagram is commutative 
  \[
 \xymatrix{
 T' \ar@/^/[rrd]^{p_1'} \ar@/_/[rdd]_{p_2'} \ar@{.>}[rd]^{\Phi'} \\
    & P_m\xi\times_{\GL\left(m\right)}V_mW\ar[r]^F'\ar[d]^{\rho}  & EGL\left(m\right)\times_{\GL\left(m\right)}V_mW\ar[d]^{\tilde{p}_0} \\
    &B\ar[r]_f  & BGL\left(m\right)
  }
  \]
  with
  \[
  F'\left(\left[X,v\right]\right)=\left[F\left(x,\lambda\right),v\right],
  \]
  \[
  \rho\left(\left[X,v\right]\right)=p\left(X\right)
    \]
     \[
  \tilde{ p}_0\left(\left[e,v\right]\right)=p\left(e\right).
  \]
  
  This proves that $\Fr_m\left(\xi\right)$ is the pullback of $ \left(EGL\left(m\right)\times_{GL\left(m\right)}V_mW,\tilde{p}_0,BGL\left(m\right)\right)$ via $f$.
  \end{proof} 

\section{Model of associated bundle }
In this section we construct the rational model of de framed bundle associated to a vector bundle construct in Section 2. We start with some notions of rational homotopy theory. Afer we give model of the Borel bundle constructed by  K. Matsuo in \cite{kent}, and give this construction in a particular case. And finally we will use this model of Borel bundle to construct the model of a framed bundle.
\subsection{Basic notions}
In this section we recall some classics notions of rational homotopy theory. 
\subsubsection{Rational homotopy theory}
In all this paper we wil use the standard tools of rational homotopy theory, following the notation and terminologie of \cite{rht}. Recall that $A_{pl}$ is the Sullivan- de Rham contravariant functor and that for a simply connected space of finite type , $A_{pl}\left(X\right)$ is a CGDA(commutative differential graded algebra). Any CGDA weakly equivalent to $A_{pl}\left(X\right)$ is called a CGDA model of $X$ and it completely encodes the rational homotopy of $X$.
\subsubsection{Model of bundle}
Let $\xi=\left(E,p,B\right)$ be a bundle of fiber $F$. Assume that $F,B$ are simply connected CW complexes of finite type. If $\left(A,d_A\right)$ is a CGDA model of $B$. In \cite[Theorem~15.3]{rht} it's proved that we can form a commutative diagramme 
\[
\xymatrix@!C{
    A_{pl}\left(B\right) \ar[r]^{A_{pl}\left(p\right)} & A_{pl}\left(E\right) \ar[r]& A_{pl}\left(F\right)  \\
    \left(A,d_A\right)  \ar[u]^{m}\ar@{^{(}->}[r]& \left(A\otimes \Lambda V,d\right)\ar[r]\ar[u]{\phi}& \left(\Lambda V,\bar{d} \right)\ar[u]^{\bar{\phi}}}
  \]
  where $\phi$, $\bar{\phi}$ and  $m$ are quasi isomorphism. 
  \[
  \begin{cases}
  d_{\mid{A}}=d_A\\
  dV\in A\otimes \Lambda V.
  \end{cases}
  \]
 \begin{defn} The inclusion
 \[
 \left(A,d_A\right)\hookrightarrow\left(A\otimes \Lambda V,d\right)
 \]
 is called the model of $\xi=\left(E,p,B\right)$. We will said that its rationally trivial if
 \[
 \left(A\otimes \Lambda V,d\right)\simeq \left(A,d_A\right)\otimes\left(\Lambda V, d\right).
  \]
  
 \end{defn}
 \begin{rmq}
 If $\xi_{\mathbb{Q}}=\left(E_{\mathbb{Q}},p_{\mathbb{Q}},B_{\mathbb{Q}}\right)$ is the rationalization of $\xi$.  If $\xi$ is rationally trivial, then $\xi_{\mathbb{Q}}$
 is trivial.
  \end{rmq}
 And we have the following result proved in \cite{lupton}
 \begin{prop}
 \label{lupton}
 Let $\xi=\left(E,p,B\right)$ be a bundle of fibre $F$ with $F$ and $B$ simply connected. Set $\xi_{\mathbb{Q}}=\left(E_{\mathbb{Q}},p_{\mathbb{Q}},B_{\mathbb{Q}}\right)$ the rationalization of $\xi$ and  $\xi_{\left(\mathbb{Q}\right)}=\left(E_{\left(\mathbb{Q}\right)},p_{\left(\mathbb{Q}\right)},B_{\left(\mathbb{Q}\right)}\right)$ its fiberwise rationalization. If
 $\xi_{\mathbb{Q}}$ is trivial then $\xi_{\left(\mathbb{Q}\right)}$ is trivial.
   \end{prop}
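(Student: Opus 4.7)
The plan is to classify both bundles by maps into a common classifying space and then exploit the universal property of rationalization. For a simply connected space $F$ of finite type, let $\mathrm{aut}(F_{\mathbb{Q}})$ denote the grouplike monoid of self-homotopy-equivalences of $F_{\mathbb{Q}}$ and $B\mathrm{aut}(F_{\mathbb{Q}})$ its classifying space. Fibrations over a base $X$ with fiber $F_{\mathbb{Q}}$ are classified up to fiber homotopy equivalence by homotopy classes $[X, B\mathrm{aut}(F_{\mathbb{Q}})]$. So $\xi_{(\mathbb{Q})}$ corresponds to a map $\varphi : B \to B\mathrm{aut}(F_{\mathbb{Q}})$, and $\xi_{\mathbb{Q}}$, seen as a bundle over $B_{\mathbb{Q}}$ with fiber $F_{\mathbb{Q}}$, corresponds to $\varphi_{\mathbb{Q}} : B_{\mathbb{Q}} \to B\mathrm{aut}(F_{\mathbb{Q}})$. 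Since $\xi_{(\mathbb{Q})}$ is, by construction, the pullback of $\xi_{\mathbb{Q}}$ along the rationalization $r : B \to B_{\mathbb{Q}}$, one has $\varphi \simeq \varphi_{\mathbb{Q}} \circ r$.

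The main step — and, in my opinion, the genuine obstacle — is Sullivan's theorem that the identity component $B\mathrm{aut}_{1}(F_{\mathbb{Q}})$ is itself a rational space when $F$ is simply connected of finite type. Its homotopy groups can be read off in terms of the derivation complex of a Sullivan model of $F$, and they are therefore rational vector spaces. Granting this, the universal property of rationalization yields a bijection $r^{*} : [B_{\mathbb{Q}}, B\mathrm{aut}(F_{\mathbb{Q}})] \xrightarrow{\sim} [B, B\mathrm{aut}(F_{\mathbb{Q}})]$.

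With these two ingredients in hand, the conclusion is formal. If $\xi_{\mathbb{Q}}$ is trivial, then the classifying map $\varphi_{\mathbb{Q}}$ is null-homotopic, hence $\varphi \simeq \varphi_{\mathbb{Q}} \circ r$ is null-homotopic as well, and therefore the fiberwise rationalization $\xi_{(\mathbb{Q})}$ is trivial. Beyond the rationality of $B\mathrm{aut}_{1}(F_{\mathbb{Q}})$ I do not expect any further difficulty: the remaining steps are applications of the classification of fibrations and the standard properties of the rationalization functor. A parallel argument can be organized purely at the level of Sullivan models, using the fact that the relative model $(A,d_A) \hookrightarrow (A \otimes \Lambda V, d)$ realizes both $\xi_{\mathbb{Q}}$ and $\xi_{(\mathbb{Q})}$, so that a tensor-product splitting of the model gives triviality of both.
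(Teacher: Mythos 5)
Your argument is correct, and at bottom it runs on the same engine as the paper's own proof: the only substantive input for this implication is that $\xi_{(\mathbb{Q})}$ is fiber homotopy equivalent to the pullback $r^{*}\xi_{\mathbb{Q}}$ along $r\colon B\to B_{\mathbb{Q}}$, after which triviality is inherited because the pullback of a trivial fibration is trivial (equivalently, the classifying map factors through a null-homotopic map). Two remarks on where your write-up differs from the paper. First, what you call the ``genuine obstacle'' --- Sullivan's rationality of $B\mathrm{aut}_{1}(F_{\mathbb{Q}})$ and the asserted bijection $r^{*}\colon [B_{\mathbb{Q}},B\mathrm{aut}(F_{\mathbb{Q}})]\to[B,B\mathrm{aut}(F_{\mathbb{Q}})]$ --- is not needed for the direction you are proving: you only use that precomposition with $r$ sends a null-homotopic map to a null-homotopic map, which is formal. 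Moreover the bijection as stated is not justified, since the full space $B\mathrm{aut}(F_{\mathbb{Q}})$ is generally neither nilpotent nor rational (its fundamental group $\pi_{0}\mathrm{aut}(F_{\mathbb{Q}})$ need not be uniquely divisible); only the component piece $B\mathrm{aut}_{1}(F_{\mathbb{Q}})$ is a rational space, so the universal property of $B\to B_{\mathbb{Q}}$ does not apply directly. You would need that bijection (or at least its injectivity) only for the converse implication, which is not claimed here. Second, the step you dismiss as ``by construction'' --- identifying the fiberwise rationalization with $r^{*}\xi_{\mathbb{Q}}$ --- is precisely the point the paper does not take for granted: it invokes Llerena's uniqueness of the fiberwise rationalization up to homotopy, together with the observation that $r^{*}\xi_{\mathbb{Q}}$ has the defining property (fiber $F_{\mathbb{Q}}$, with the natural map $E\to E_{\mathbb{Q}}$ restricting to the rationalization on fibers). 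Unless your chosen definition of $\xi_{(\mathbb{Q})}$ literally is this pullback, that identification deserves the uniqueness argument; with it supplied, your proof is complete and essentially coincides with the paper's, with the classifying-space language adding generality of phrasing but no new content.
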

   \begin{proof}
   It comes from the unicity of  $p_{\left(\mathbb{Q}\right)}: E_{\left(\mathbb{Q}\right)}\longrightarrow B$ up to homotopy pvoved by LIerena \cite[Proposition~6.1]{irene}, and the fact that it's the pullback of $p_{\mathbb{Q}}: E_{\mathbb{Q}}\longrightarrow B_{\mathbb{Q}}$ along $B\longrightarrow B_{\mathbb{Q}}$
    \end{proof}

\subsection{Model of associated bundle with fiber a homogenous space}
\label{modelfb}
 $G$ is a simply connected Lie group.\\
Let $\mathcal{P}=\left(P,p,B\right)$ be a $G$-principal bundle. It  is pull back from the universal bundle $\left(EG,p,BG\right)$ via a classifying map $\phi: B\longrightarrow BG$, and then the associated bundle $\left(P\times_GF,p,B\right)$ is the pullback of $\left(EG\times_GF,p,BG\right)$ via $\phi$.\\
Suppose that $F$ is a homogenous space, that is ,$F=\frac{H}{K}$, where $H$ is a compact Lie group simply connected and $K$ a simply connected closed subgroup of $H$. If there is  a morphism of Lie group $\mu: G\longrightarrow H$ then, $\frac{H}{K}$ admets the action of $G$ defined by $ghK=\left(\mu\left(g\right)h\right)K$. Since $G,H, K$ are simply connected $BG,BH$, and $BK$ are simply connected and by the result of Borel \cite[Theorem 19.1]{borel} their  rational cohomology are finitely generated polynomial algebras $\Lambda V_G,\Lambda V_H$ and $\Lambda V_K$. In particular thier minimals models are given by: $\left(\Lambda V_G,0\right),\left(\Lambda V_H,0\right)$ and $\left(\Lambda V_K,0\right)$. Consider $B\mu: BG\longrightarrow BH$ and $B\nu :BK\longrightarrow BH$ the morphisms induced by $\mu$ and the inclusion of $\nu$ of $K$ in $H$ . K. Matsuo prove in \cite{kent} that the model of :
\[
EG\times_G^\mu \frac{H}{K}\longrightarrow BG
\]
is of the form
\[
\left(\Lambda V_G,0\right)\hookrightarrow \left(\Lambda V_G\otimes \Lambda V_K\otimes \Lambda sV_H,d\right)
\]
where $(sV_H)^i= V_H^{i+1}$ and
\[
\begin{cases}
d_{\mid V_G}=d_{\mid V_K}=0\\
dsv=(B\mu)^{\ast}\left(v\right)-(B\nu)^{\ast}\left(v\right) \text{ for } sv\in sV_H.
\end{cases}
\]
Let $\phi^\ast :\left(\Lambda V_G,0\right)\longrightarrow\left(A,d_A\right)$ be the Sullivan representative of $\phi$. Then combining and \cite[Theorem~270]{amt08} we obtain the following result.
\begin{prop}
\label{model1}
The model of 
\[
P\times_G^{\mu}\frac{H}{K}\longrightarrow B
\]
is of the form
\[
\left(A,d_A\right)\hookrightarrow \left(A\otimes \Lambda V_K\otimes \Lambda sV_H,D\right)
\]
where
\[
\begin{cases}
D_{\mid A}=d_A\\
D_{\mid V_K}=0\\
Dsv=\phi^\ast\left((B\mu)^{\ast}\left(v\right)\right)-(B\nu)^\ast\left(v\right).
\end{cases}
\]
\end{prop}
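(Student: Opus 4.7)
The plan is to combine Matsuo's model of the universal associated bundle, which has just been quoted, with the standard base-change principle for Sullivan models of pullback fibrations, which is the content of the cited \cite[Theorem~270]{amt08}. Step one is to observe that, exactly as in the argument identifying $\Fr_m(\xi)$ with a pullback of a Borel bundle in Section~2, the bundle $P \times_G^\mu H/K \to B$ is the pullback of $EG \times_G^\mu H/K \to BG$ along the classifying map $\phi : B \to BG$ of $\mathcal{P}$, and $\phi^*$ is by hypothesis a Sullivan representative of $\phi$. This reduces the problem to base-changing Matsuo's model along $\phi^*$.

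Step two is to record Matsuo's relative Sullivan model of the universal bundle,
\[
(\Lambda V_G, 0) \hookrightarrow (\Lambda V_G \otimes \Lambda V_K \otimes \Lambda sV_H, d),
\]
and to note that in the formula $d(sv) = (B\mu)^*(v) - (B\nu)^*(v)$ the first summand lies in the $\Lambda V_G$ factor while the second summand lies in the $\Lambda V_K$ factor. Step three is to apply the base-change principle: if $\phi^* : (R, d_R) \to (A, d_A)$ models the base map of a pullback square and $(R, d_R) \hookrightarrow (R \otimes \Lambda W, d)$ is a relative Sullivan model of the source bundle, then the pulled-back bundle admits the relative Sullivan model $(A, d_A) \hookrightarrow (A \otimes \Lambda W, D)$, where $D$ is obtained from $d$ by replacing each $R$-coefficient appearing in $d$ by its image under $\phi^*$. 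Specializing to $R = \Lambda V_G$ and $W = V_K \oplus sV_H$ produces exactly the announced formulas: $D|_A = d_A$, $D|_{V_K} = 0$ (inherited since $d$ vanished on $V_K$), and $D(sv) = \phi^*((B\mu)^*(v)) - (B\nu)^*(v)$, with $\phi^*$ applied only to $(B\mu)^*(v)$ because only this term has coefficients in $\Lambda V_G$.

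I expect no real obstacle; the statement is a bookkeeping consequence of the two inputs. The only point that deserves care is the compatibility between the topological pullback and the corresponding CGDA pushout: one must make sure that tensoring Matsuo's relative Sullivan model along $\phi^*$ genuinely yields a model of the pulled-back total space, and that the resulting differential really is the one obtained by applying $\phi^*$ to the $\Lambda V_G$-coefficients. This is precisely what \cite[Theorem~270]{amt08} guarantees, the key input being that $(\Lambda V_G, 0) \hookrightarrow (\Lambda V_G \otimes \Lambda V_K \otimes \Lambda sV_H, d)$ is a relative Sullivan algebra and therefore cofibrant, so that $- \otimes_{\Lambda V_G} A$ computes the correct homotopy pushout of CGDAs.
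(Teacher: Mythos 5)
Your proposal is correct and is essentially the paper's own argument: the paper also identifies $P\times_G^{\mu}H/K$ as the pullback of Matsuo's universal Borel bundle along the classifying map $\phi$ and then invokes the base-change theorem (cited as \cite[Theorem~270]{amt08}) applied to the Sullivan representative $\phi^\ast$ to transport the relative model, yielding the stated differential. Your added remarks on why $\phi^\ast$ only hits the $(B\mu)^\ast(v)$ term and on cofibrancy of the relative Sullivan algebra are exactly the bookkeeping the paper leaves implicit.
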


  \subsection{Model of framed associated bundle}
  \label{rationallytrivial}
  In this section we will construct  a model of framed bundle associated to the vector bundle $\xi=\left(E,p,B\right)$. We suppose that $B$ is simply connected. In this case, the $GL\left(m\right)$ structure on its associated principal bundle $P\xi$ can be restreinte to $SO\left(m\right)$ . We suppose also that $W=\mathbb{R}^{m+k}$ with $k\geq 1$, in this case we have an $SO\left(m\right)$ equivariant homeomorphism between $\frac{SO\left(m+k\right)}{SO\left(k\right)}$ and $V_m\left(\mathbb{R}^{m+k}\right)$. This homeomorphism induce an isomorphism of bundle over $B$ between $\Fr_m\left(\xi\right)=\left(P_m\xi\times_{SO\left(m\right)}V_m\left(\mathbb{R}^{m+k}\right),\rho,B\right)$  and $\left(P_m\xi\times_{SO\left(m\right)}\frac{SO\left(m+k\right)}{SO\left(k\right)},\tilde{\rho},B\right)$.\\
  We will use the following notation for certains cohomology classes of $BSO\left(m\right),BSO\left(k\right)$ and $BSO\left(m+k\right)$:
  
  \begin{itemize}
  \item
  $p_i\in H^{4i}\left(BSO\left(m\right),\mathbb{Q}\right); i:1\cdots,[\frac{m-1}{2}]$ is the universal Pontryagin classes,\\
  $e_m\in H^{m}\left(BSO\left(m\right),\mathbb{Q}\right)$, for $m$ even is Euler classes,
  \item
   $b_i\in H^{4i}\left(BSO\left(k\right),\mathbb{Q}\right); i:1\cdots,[\frac{k-1}{2}]$ is the universal Pontryagin classes,\\
  $e_k\in H^{k}\left(BSO\left(k\right),\mathbb{Q}\right)$, for $k$ even is Euler classes,
  \item
   $c_i\in H^{4i}\left(BSO\left(m+k\right),\mathbb{Q}\right); i:1\cdots,[\frac{k-1}{2}]$ is the universal Pontryagin classes,\\
  $e_{m+k}\in H^{m+k}\left(BSO\left(m+k\right),\mathbb{Q}\right)$, for $m+k$ even is Euler classes.

  \end{itemize}
 \begin{prop}
   \label{prop:principal}
Let 
$\xi := \left(E,\pi,B\right)$ be a vectoriel bundle with fibre $\mathbb{R}^m$
and
\[
 \left( A,d_A\right)\overset{\simeq}\longrightarrow{A_{pl}\left(B\right)}
\]
a CGDA model of B. If $ p_i\left(\xi\right) \in {A \cap {kerd_A}}$ are the  Pontrjagin classes  of $\xi$, then the model of
\[
\tilde{\rho}:P_m\xi\times_{SO\left(m\right)}\frac{SO\left(m+k\right)}{SO\left(k\right)}\longrightarrow{B}
\]
is of the form :

 \begin{itemize}
 \item{if $m=2l+1$,$k=2s+1$}
 \[
 \left( A,d_A\right)\longrightarrow \left({A\otimes\Lambda{\left(x_{s+1},\cdots,x_{l+s},\bar{e}_{m+k-1}\right)}}, D\right)
 \]
\[ 
\begin{cases} 
 Da=d_Aa \text{ if } a\in A\\
 Dx_i =p_i\left(\xi\right) \text{ if } i\leq \frac{m}{4},
 Dx_i=0 \text{ if } i>\frac{m}{4}\\
 D\bar{e}_{m+k-1}=0, 
 \end{cases}
 \]
 \item{if $m=2l$, $k=2s+1$}
 \[
 \left( A,d_A\right)\longrightarrow \left({A\otimes\Lambda{\left(x_{s+1},\cdots,x_{l+s}\right)}}, D\right)
 \]
\[ 
\begin{cases} 
 Da=d_Aa \text{ if } a\in A\\
 Dx_i =p_i\left(\xi\right) \text{ if } i\leq \frac{m}{4},
 Dx_i=0 \text{ if } i>\frac{m}{4}\\
 \end{cases}
 \]
\item{if $m=2l+1$,$k=2s$}
 \[
 \left( A,d_A\right)\longrightarrow \left({A\otimes\Lambda{\left(x_s,\cdots,x_{l+s},e_k\right)}}, D\right)
 \]
\[ 
\begin{cases} 
 Da=d_Aa \text{ if } a\in A\\
 Dx_s=e_k^2+p_s\left(\xi\right)\\
 Dx_i =p_i\left(\xi\right) \text{ if } i\leq m,
 Dx_i=0 \text{ if } i>m\\
 De_k=0
 \end{cases}
 \]
 \item{if $m=2l$, $k=2s$}
  \[
 \left( A,d_A\right)\longrightarrow \left({A\otimes\Lambda{\left(x_s,\cdots,x_{l+s-1},\bar{e}_{m+k-1},e_k\right)}}, D\right)
\]
\[ 
\begin{cases} 
 Da=d_Aa \text{ if } a\in A\\
 Dx_s=e_k^2+p_s\left(\xi\right)\\
 Dx_i =p_i\left(\xi\right) \text{ if } s< i\leq m,
 Dx_i=0 \text{ if } i>m\\
 D\bar{e}_{m+k-1}=De_k=0.
 \end{cases}
 \]
 
 \end{itemize}
 \end{prop}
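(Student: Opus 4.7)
The plan is to apply Proposition~\ref{model1} to the framed bundle. Setting $G=\SO(m)$, $H=\SO(m+k)$, $K=\SO(k)$ with $\mu$ and $\nu$ the two standard block inclusions of $G$ and $K$ into $H$, the identification of $\Fr_m(\xi)$ with $P_m\xi\times_{\SO(m)}^{\mu}\SO(m+k)/\SO(k)$ made in the text allows Proposition~\ref{model1} to be applied directly, giving a model
\[
(A,d_A)\hookrightarrow(A\otimes\Lambda V_K\otimes\Lambda sV_H,D),\qquad D(sv)=\phi^{\ast}((B\mu)^{\ast}(v))-(B\nu)^{\ast}(v).
\]
The proof then reduces to identifying $(B\mu)^{\ast}$ and $(B\nu)^{\ast}$ on generators and simplifying the resulting CGDA.

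To evaluate the two restriction maps I would use the Whitney sum formula, noting that $\mu$ and $\nu$ classify the stabilisations $V\mapsto V\oplus\mathbb{R}^k$ and $V\mapsto\mathbb{R}^m\oplus V$ respectively. Since a trivial bundle has trivial total Pontryagin class and vanishing Euler class (of positive rank), each Pontryagin generator $c_i$ of $V_H$ pulls back to the corresponding universal Pontryagin class on $B\SO(m)$ (resp. $B\SO(k)$) within the rank bound, and to zero otherwise; and when $m+k$ is even, $e_{m+k}$ pulls back to zero on both sides. Precomposing with $\phi^{\ast}$ turns these into the classes $p_i(\xi)\in A$ and $b_i\in V_K$. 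The only non-obvious input is Borel's relation $p_j=e_{2j}^2$ on $H^{\ast}(B\SO(2j))$, which gives $(B\nu)^{\ast}(c_s)=e_k^2$ when $k=2s$ and $\phi^{\ast}(B\mu)^{\ast}(c_l)=e(\xi)^2$ when $m=2l$.

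Next I would split into the four parity cases, substitute Borel's descriptions $H^{\ast}(B\SO(2j+1);\mathbb{Q})=\mathbb{Q}[p_1,\ldots,p_j]$ and $H^{\ast}(B\SO(2j);\mathbb{Q})=\mathbb{Q}[p_1,\ldots,p_{j-1},e_{2j}]$, and simplify. The key reduction is that whenever a Pontryagin generator $b_i\in V_K$ and its dual $sc_i$ both occur in the model with $D(sc_i)=p_i(\xi)-b_i$, the change of variables $b_i':=b_i-p_i(\xi)$, which is legitimate because $p_i(\xi)\in\ker d_A$ by hypothesis, turns this pair into the acyclic Koszul--Sullivan extension $(b_i',sc_i)$ with $D(sc_i)=-b_i'$. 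Contracting all such acyclic pairs and renaming the surviving $sc_i$ as $x_i$ (and $se_{m+k}$ as $\bar e_{m+k-1}$ when $m+k$ is even) leaves exactly the four presentations of the proposition.

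The main obstacle will be keeping careful track of the boundary indices when $k$ or $m$ is even. The Euler class $e_k\in V_K$ has degree $k$, which is never of the form $4i-1$, so $e_k$ cannot be eliminated against any $sc_i$ and must survive in the model. In return the relation $(B\nu)^{\ast}(c_s)=e_k^2$ yields a genuinely nontrivial surviving differential $Dx_s=p_s(\xi)+e_k^2$ (sign absorbable by reorienting $x_s$). When $m$ is even the analogous Euler-square $e(\xi)^2$ lives already in $A$ and simply merges with $p_l(\xi)$ inside the differential, producing no new generator. Once these boundary phenomena are handled in each of the four parity cases, direct inspection of the surviving generators and their differentials yields the formulas in the statement.
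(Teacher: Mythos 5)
Your proposal is correct and follows essentially the same route as the paper: apply Proposition~\ref{model1} with $G=\SO(m)$, $H=\SO(m+k)$, $K=\SO(k)$, identify $(B\mu)^{\ast}$ and $(B\nu)^{\ast}$ on the Pontryagin and Euler generators, and then eliminate the pairs $(b_i,x_i)$ with $Dx_i=p_i(\xi)-b_i$. The only cosmetic differences are that you derive the pullbacks from the Whitney sum formula and Borel's relation $p_j=e_{2j}^2$ where the paper cites a reference, and you contract acyclic Koszul--Sullivan pairs where the paper writes down the explicit projection quasi-isomorphism $\Phi$; these are equivalent steps (up to immaterial sign conventions in $Dx_s$).
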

 \begin{proof}
 For the proof we consider the case $m=2l+1$ and $k=2s+1$. The proofs of the others cases are similar.\\
 By \ref{model1}, the model of 
\[
\tilde{\rho}:P_m\xi\times_{SO\left(m\right)}V_m\left(\mathbb{R}^{m+k}\right)\longrightarrow{B}
\] 
is of the form
\[
\left(A,d_A\right)\hookrightarrow\left(A\otimes\Lambda\left(x_1,\cdots,x_{l+s},\bar{e}_{m+k-1},b_1,\cdots,b_s\right),D\right)
\]
where
\[
\begin{cases}
D_{\mid A}=d_A\\
Db_i=0\\
Dx_i=\phi\left((B\mu)^{\ast}\left(c_i\right)\right)-(B\nu)^\ast\left(c_i\right).
\end{cases}
\]
By \cite[Theorem 7.1]{cc} $(B\mu)^{\ast}\left(b_i\right)=p_i$ and $(B\nu)^\ast\left(c_i\right)=b_i$. On the other hand 
 the rational model of $f $ is completely determined by the Pontryagin classes of $\xi$ denoted $p_i\left(\xi\right)$. That is
  \[
  \phi:\left(\Lambda{\left(p_1,\cdots,p_{l}\right)},0\right)\longrightarrow \left(A,d_A\right)
  \]
   defined by
  \[
  \phi\left(p_i\right)= p_i\left(\xi\right).
  \]
  Since $\xi$ if of rank $m$, then $p_i\left(\xi\right)=0$ for $i>\frac{m}{4}$. Consequently, we have  
  
\[
\begin{cases}
D_{\mid A}=d_A\\
Db_i=0\\
Dx_i=p_i\left(\xi\right)-b_i.
\end{cases}
\]

 Now let 
%\[
%\left(A,d_A\right)\hookrightarrow\left(A\otimes\Lambda\left(x_s,\cdots,x_{l+s},\bar{e}_{m+k-1}\right),D\right)
%\]
%avec 
%\[
%\begin{cases}
 %Dx_s =p_i\left(\xi\right),\\
 %D\bar{e}_{m+k-1}=0
 %\end{cases}
 %\]
 %et comme $\xi$ est fibr\'e vectoriel de rang m, les classes de Pontryagin sont nulles pour $i>m$. 
 %D'ou le mod\`ele minimal de 

 \[
 \left( A,d_A\right)\longrightarrow \left({A\otimes\Lambda{\left(x_{s+1},\cdots,x_{l+s},\bar{e}_{m+k-1}\right)}}, D\right)
 \]
\[ 
\begin{cases} 
 Da=d_Aa \text{ if } a\in A\\
 Dx_i =p_i\left(\xi\right) \text{ if } i\leq \frac{m}{4},
 Dx_i=0 \text{ if } i>\frac{m}{4}\\
 D\bar{e}_{m+k-1}=0.
 \end{cases}
 \]
 we want to prove that it's the model of 
 \[
\left(A,d_A\right)\hookrightarrow\left(A\otimes\Lambda\left(x_1,\cdots,x_{l+s},\bar{e}_{m+k-1},b_1,\cdots,b_s\right),D\right)
\]
where
\[
\begin{cases}
D_{\mid A}=d_A\\
Db_i=0\\
Dx_i=p_i\left(\xi\right)-b_i.
\end{cases}
\]
for this we define
\[
\Phi :\left(A\otimes\Lambda\left(x_1,\cdots,x_{l+s},\bar{e}_{m+k-1},b_1,\cdots,b_s\right),D\right)\longrightarrow\left({A\otimes\Lambda{\left(x_{s+1},\cdots,x_{l+s},\bar{e}_{m+k-1}\right)}}, D\right)
\]
by
\[
\begin{cases}
\Phi_{\mid A}=id\\
\Phi\left(x_i\right)= 0 \text{ for } 1\leq i\leq s\\
\Phi\left(x_i\right)=x_i \text{ for } i>s\\
\Phi\left(b_i\right)=0\\
\Phi\left(p_i\right)=p_i\left(\xi\right)
\end{cases}
\]
 
it's clear that $\Phi$ commutes with the differentials and define a quasi-isomorphism. This end the proof.
\end{proof} 
\begin{rmq}
 \label{stiefel}
   From this construction we deduce the minimal model of the Stiefel manifold of m frame in $\mathbb{R}^{m+k}$. Thus we have
   \begin{itemize}
   \item {if $k=2s+1$} we have
   \[
   V_m\left(\mathbb{R}^{m+k}\right)\simeq_{\mathbb{Q}}
   \begin{cases}
   \left(\Lambda\left(x_{s+1},\cdots,x_{l+s},\bar{e}_{m+k-1}\right),0\right) \text{ if } m=2l+1,\\
   \left(\Lambda\left(x_{s+1},\cdots,x_{l+s}\right),0\right) \text{ if } m=2l   
   \end{cases}
   \]
   \item {if $m=2l+1$ and $k=2s$} we have
    \[
   V_m\left(\mathbb{R}^{m+k}\right)\simeq_{\mathbb{Q}}
   \begin{cases}
    \left(\Lambda\left(x_s,\cdots,x_{l+s},e_k\right),d\right) \text{ where }\\
    dx_s=e_k^2, de_k=0\\
    dx_i=0 \text{ if } i>s
    \end{cases}
   \]
  
   \item {if $m=2l$ and $k=2s$} we have
    \[
   V_m\left(\mathbb{R}^{m+k}\right)\simeq_{\mathbb{Q}}
   \begin{cases}
    \left(\Lambda\left(x_s,\cdots,x_{l+s-1},\bar{e}_{m+k-1},e_k\right),d\right) \text{ where}\\
    dx_i=0 \text{ if } i>s\\
    dx_s=e_k^2, \text{ } 
    d\bar{e}_{m+k-1}=de_k=0
        \end{cases}
   \]
   
      \end{itemize}
   \end{rmq}

 From this proposition we deduce the following result
  \begin{coro}
 \label{trivial}
 If $p_i\left(\xi\right)=0,  \forall i\geq s$ the Framed bundle associated to $\xi$ with fiber $V_m\left(\mathbb{R}^{m+k}\right)$, $\Fr_m\left(\tau_M\right)$, is rationally trivial.
 \end{coro}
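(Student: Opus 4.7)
The plan is to read the corollary directly off the four parity cases of Proposition~\ref{prop:principal}. Under the hypothesis $p_i(\xi)=0$ for all $i\geq s$, I will argue that in each case the total differential $D$ of the relative Sullivan model $(A\otimes\Lambda V,D)$ preserves the subalgebra $\Lambda V$ (no ``mixed'' terms involving elements of $A$ of positive degree appear), so the cdga splits as a tensor product $(A,d_A)\otimes(\Lambda V,D|_{\Lambda V})$. By the definition of rational triviality given earlier in Section~3, this is exactly what has to be established.

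First I would walk through the odd-codimension cases $k=2s+1$ (cases one and two of Proposition~\ref{prop:principal}). Here the new generators are $x_{s+1},\ldots,x_{l+s}$ together with possibly $\bar{e}_{m+k-1}$, and the formulas read $Dx_i=p_i(\xi)$ when $i\leq m/4$, $Dx_i=0$ otherwise, and $D\bar{e}_{m+k-1}=0$. Since every surviving index satisfies $i\geq s+1>s$, the hypothesis forces every $p_i(\xi)$ that could appear to vanish, and hence $D$ is zero on all fiber generators. Next I would handle the even-codimension cases $k=2s$ (cases three and four). The new generators now start at $x_s$, the only twisted formula is $Dx_s=e_k^2+p_s(\xi)$, and $Dx_i=p_i(\xi)$ for $i>s$ while $De_k=D\bar{e}_{m+k-1}=0$. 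The hypothesis kills $p_s(\xi)$ and all $p_i(\xi)$ for $i>s$, leaving $Dx_s=e_k^2$ and $Dx_i=0$; both the squared-Euler term and zero lie inside $\Lambda V$, so once again $D(\Lambda V)\subseteq\Lambda V$.

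In every one of the four cases the residual differential on $\Lambda V$ is, by direct inspection, precisely the fiber differential listed in Remark~\ref{stiefel} for the Stiefel manifold $V_m(\mathbb{R}^{m+k})$. Therefore the identification $(A\otimes\Lambda V,D)=(A,d_A)\otimes(\Lambda V,d_{\mathrm{fiber}})$ holds not merely up to quasi-isomorphism but as cdga's, giving the required rational trivialization of $\Fr_m(\xi)$. The real work lies already in Proposition~\ref{prop:principal}; the corollary itself is a bookkeeping verification, and the only mild subtlety to double-check is the indexing: in the odd-codimension case the new generators begin at $s+1$, so the assumption $p_i=0$ for $i\geq s$ is slightly stronger than strictly necessary, whereas in the even-codimension case it is tight because $p_s(\xi)$ enters $Dx_s$ explicitly.
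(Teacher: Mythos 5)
Your proposal is correct and follows exactly the route the paper intends: the corollary is stated as an immediate consequence of Proposition~\ref{prop:principal}, and your case-by-case check that the hypothesis kills every $p_i(\xi)$ appearing in the differentials, so that $(A\otimes\Lambda V,D)$ splits as $(A,d_A)\otimes(\Lambda V,\bar d)$ with $\bar d$ the Stiefel differential of Remark~\ref{stiefel}, is precisely the (omitted) verification. Your remark that in the odd-codimension case only $p_i(\xi)=0$ for $i\geq s+1$ is actually needed is a useful clarification consistent with the hypothesis used later in Theorem~\ref{casod}.
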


\section{Model of space of immersions }
In this section we give the proofs of Theorem~\ref{casod}, Theorem~\ref{caspair} and Corollary~\ref{betti}. We start with a review of Smale-Hirsch theorem which identifies the space of immersions and the space of section.

\subsection{ Associated framed bundle and space of immersions}
In this part we describe the Smale-Hirsch theorem and give its connection with the Stiefel bundle.\\
Let  $M$ and $N$ be smooth manifolds of dimension $m$ and $m+k$ respectively. We suppose that $k>0$ and that $M$ and $N$ are simply connected. We denote by $TM$ and $TN$ the tangent of $M$ and $N$ respectively. We set $T_m=P_mTM$ and $T_mN=P_mTN$, $P_mTM$ and $P_mTN$ are the space of $m$- frame in $TM$ and $TN$ defined in section~\ref{stiefelbundle}. Then $\Fr_m\left(\tau_M\right)=\left(T_mM\times_{SO\left(m\right)}T_mN,p,M\right)$ denotes the framed bundle associated to $TM$ with fiber $T_mN$. Let $\Gamma\left(\Fr_m\left(\tau_M\right)\right)$ be the space of sections of $\left(T_mM\times_{SO\left(m\right)}T_mN,p,M\right)$. That is 
\[
\Gamma\left(\Fr_m\left(\tau_M\right)\right)=\{ s:M\longrightarrow T_mM\times_{SO\left(m\right)}T_mN\mid ps=id\}.
\]  
We will construct a map from $Imm\left(M,N\right)$ to $\Gamma\left(p\right)$. For this, let $f\in Imm\left(M,N\right)$ it induces an $SO\left(m\right)$ equivariant map from $f^\ast:T_mM\longrightarrow T_mN$ the map which send the  $m$ frames of $T_mM$ into $m$- frames of $T_mN$. Then the map

\begin{eqnarray*}
T_mM&\longrightarrow&T_mM\times T_mN\\
X&\mapsto&\left(X,f^\ast\left(X\right)\right)
\end{eqnarray*}
is also $SO\left(m\right)$-equivariant. Passing to $SO\left(m\right)-$ orbits yields 
\[
s_f: M\longrightarrow T_mM\times_{SO\left(m\right)}T_mN
\]
which is in fact a section of $\Fr_m\left(\tau_M\right)=\left(T_mM\times_{SO\left(m\right)}T_mN,p,M\right)$. 
Morris Hirsch proved in \cite{hirsh} the following result.
\begin{thm}
\label{hirsch}
The map 
\begin{eqnarray*}
Imm\left(M,N\right)&\longrightarrow &\Gamma\left(\Fr_m\left(\tau_M\right)\right)\\
f&\mapsto&s_f
\end{eqnarray*}
is a homotopy equivalence.

\end{thm}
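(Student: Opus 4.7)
The plan is to follow Hirsch's original argument, which proceeds by a local-to-global induction on a handle decomposition of $M$ and ultimately reduces to Smale's theorem for immersions of disks. The map $\Phi : Imm(M,N) \to \Gamma(\Fr_m(\tau_M))$, $f \mapsto s_f$, is continuous because $s_f(x)$ is constructed pointwise from $df_x$, so it depends continuously on $f$ in the weak $C^1$-topology. It therefore suffices to prove that $\Phi$ is a weak homotopy equivalence.

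The strategy is to establish, by induction on the handles of a handle decomposition $M = H_0 \cup H_1 \cup \cdots \cup H_N$, that the restricted map $\Phi_U : Imm(U,N) \to \Gamma(\Fr_m(\tau_M)|_U)$ is a weak equivalence for each $U = H_0 \cup \cdots \cup H_j$. The inductive step rests on a local-to-global (quasi-sheaf / microfibration) principle: for an open cover $U = V \cup W$ of $M$, the restriction map
\[
Imm(U,N) \longrightarrow Imm(V,N) \times^{h}_{Imm(V\cap W, N)} Imm(W,N)
\]
is a weak equivalence, and similarly for the section spaces. Combined with a five-lemma argument on the homotopy pullback squares, this reduces the problem to handles one at a time.

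For the base case, one takes $U \cong D^i \times D^{m-i}$, a single handle, and deformation retracts both $Imm(U,N)$ and $\Gamma(\Fr_m(\tau_M)|_U)$ onto their values over the core $D^i \times \{0\}$. After further retraction to the center point, the section space becomes the fiber $V_m(\mathbb{R}^{m+k})$ up to homotopy, while the space of immersions $Imm(D^i,N)$ is identified with the same Stiefel-type fiber by Smale's theorem on immersions of disks. A careful comparison shows that $\Phi$ realizes this identification, completing the base case.

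The main obstacle, as in Hirsch's original paper, is twofold. First, the base case requires Smale's theorem, whose proof is genuinely geometric and non-trivial. Second, the quasi-sheaf/microfibration property is the real technical heart of the argument: given a section over $V \cup W$ and immersions over $V$ and $W$ inducing it, one must integrate and smooth them into a single immersion over $V \cup W$, which demands a delicate convex integration or jet-extension construction near $V \cap W$. Once these two ingredients are in place, the induction runs smoothly.
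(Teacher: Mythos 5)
The paper itself does not prove this statement: it is quoted verbatim from Hirsch \cite{hirsh}, and the Smale--Hirsch theorem is used as a black box. Your proposal is therefore not an alternative to anything in the paper; it is a reconstruction of Hirsch's original argument (handle induction, a local-to-global/restriction-fibration principle, base case via Smale's theorem for disks), and at the level of architecture it is the standard and correct route. As you acknowledge, the two deferred ingredients --- Smale's theorem and the fact that the relevant restriction maps of immersion spaces are (micro)fibrations, so that the squares you compare are homotopy cartesian --- are precisely the substance of Hirsch's paper, so what you have is an accurate outline rather than a proof; that is acceptable here given the paper treats the result as known.

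One genuine gap in the sketch as written: nowhere do you invoke the standing hypothesis $k>0$, yet the statement is false in codimension zero for closed $M$ (the torus is parallelizable, so $\Gamma\left(\Fr_m\left(\tau_M\right)\right)\neq\emptyset$, while a closed surface admits no immersion into $\mathbb{R}^{2}$, since in equal dimensions an immersion is an open map and compactness would force the image to be all of $\mathbb{R}^{2}$). In Hirsch's induction the positive codimension is exactly what allows the extension step over the top-dimensional handle (equivalently, the fibration/weak-equivalence statement fails for the last $m$-handle when $k=0$); for open manifolds, or if one stops before the top handle, the argument goes through in all codimensions. If you ran your induction literally as stated over all handles of a closed $M$ without flagging where $k\geq 1$ enters, you would ``prove'' a false statement, so the base-case-and-glue outline must be supplemented by an explanation of why the final handle is admissible precisely when the codimension is positive.
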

 \subsection{Rational model of the space of immersions }
 
  In this part we give the proofs of Theorem~\ref{casod}, and Theorem~\ref{caspair}.

\begin{thm}
\label{casod}
Let $M$ be simply connected manifold of dimension $m$ and $k=2s+1$ with $s\neq 0$. Suppose that $p_i\left(\tau_M\right)=0, \forall i\geq s+1$. Then, each component $Imm\left(M ,\mathbb{R}^{m+k}\right)$
has the rational homotopy type of the product of Eilenberg Maclane spaces, which depends only on the rational Betti numbers of $M$ and $k$. More precisely we have:
\[
Imm\left(M ,\mathbb{R}^{m+k},f\right)\simeq_{\mathbb{Q}}
\begin{cases}
\prod_{s\leq i\leq l+s}\prod_{1\leq q\leq 4i-1}K\left(H^{4i-1-q}\left(M,\mathbb{Q}\right),q\right) \text{ si } m=2l\\
\prod_{s\leq i\leq l+s}\prod_{1\leq q\leq 4i-1}K\left(H^{4i-1-q}\left(M,\mathbb{Q}\right),q\right)\times {K}\text{ si } m=2l+1
\end{cases}
\]
where
\[
K={\prod_{1\leq j\leq m+k-1}K\left(H^{m+k-1-j}\left(M,\mathbb{Q}\right),j\right)}
\]
and
$Imm\left(M ,\mathbb{R}^{m+k},f\right)$ is the component of $Imm\left(M ,\mathbb{R}^{m+k}\right)$ contains $f$.
\end{thm}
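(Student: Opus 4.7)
The plan is to combine three ingredients: Hirsch's identification of the immersion space with a section space, the rational triviality of the framed bundle that follows from the Pontryagin vanishing hypothesis, and the fact that $V_m(\mathbb{R}^{m+k})$ is rationally a product of odd spheres when $k$ is odd. First I would invoke Theorem~\ref{hirsch} to obtain the homotopy equivalence
\[
Imm(M,\mathbb{R}^{m+k}) \simeq \Gamma(\Fr_m(\tau_M)),
\]
reducing the problem to computing the rational homotopy type of the section space of the framed bundle.

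Next, I would verify that under the assumption $p_i(\tau_M)=0$ for all $i\geq s+1$, the model provided by Proposition~\ref{prop:principal} has identically zero differential on the fiber generators. Indeed, for $k=2s+1$ the generators appearing are $x_{s+1},\ldots,x_{l+s}$ (together with $\bar e_{m+k-1}$ when $m$ is odd), and each $Dx_i$ equals either $p_i(\tau_M)$, which is $0$ by hypothesis since $i\geq s+1$, or already $0$ (when $i>m/4$); likewise $D\bar e_{m+k-1}=0$. Hence the model splits as $(A,d_A)\otimes(\Lambda W,0)$ and $\Fr_m(\tau_M)$ is rationally trivial, in the spirit of Corollary~\ref{trivial}. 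Proposition~\ref{lupton} then trivializes the fiberwise rationalization and yields
\[
\Gamma(\Fr_m(\tau_M))\simeq_{\mathbb{Q}} Map\bigl(M,\,V_m(\mathbb{R}^{m+k})\bigr).
\]

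To finish, Remark~\ref{stiefel} shows that for $k$ odd, $V_m(\mathbb{R}^{m+k})$ is rationally a finite product of odd spheres $S^{4i-1}$ for $s+1\leq i\leq l+s$, supplemented by $S^{m+k-1}$ when $m$ is odd; each odd sphere is rationally an Eilenberg--MacLane space $K(\mathbb{Q},n)$. Applying the classical identification
\[
Map\bigl(M,K(\mathbb{Q},n)\bigr)\,\simeq\,\prod_{1\leq q\leq n}K\bigl(H^{n-q}(M,\mathbb{Q}),q\bigr)
\]
factor by factor produces the product of Eilenberg--MacLane spaces stated in the theorem, depending only on the Betti numbers of $M$ and on $k$. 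The main obstacle I anticipate is the componentwise refinement: I would argue that since $V_m(\mathbb{R}^{m+k})$ is rationally a product of odd Eilenberg--MacLane spaces and therefore a rational H-space, translation by any map is a self-equivalence of $Map(M,V_m(\mathbb{R}^{m+k}))$, so all its components share the same rational homotopy type; this ensures that the component of $Imm(M,\mathbb{R}^{m+k})$ containing the given immersion $f$ realizes the same product, independently of $f$.
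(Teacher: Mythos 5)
Your proposal follows essentially the same route as the paper's proof: Hirsch's theorem to pass to the section space, rational triviality of $\Fr_m\left(\tau_M\right)$ from Proposition~\ref{prop:principal}/Corollary~\ref{trivial} combined with Proposition~\ref{lupton}, the rational splitting of $V_m\left(\mathbb{R}^{m+k}\right)$ into odd Eilenberg--MacLane spaces for $k$ odd, and Thom's identification of $Map\left(M,K\left(\mathbb{Q},n\right)\right)$ with a product of Eilenberg--MacLane spaces. The only differences are minor: the paper makes explicit the intermediate step (M{\o}ller's theorem, using nilpotency) that identifies $\Gamma\left(\Fr_m\left(\tau_M\right),s_f\right)$ rationally with the sections of the fiberwise rationalization, a step you leave implicit, and it deals with path components by tracking the section $s_f$ through Thom's component-independent formula rather than by your H-space translation argument; both devices are adequate.
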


\begin{proof}
 Let $f:M\looparrowright \mathbb{R}^{m+k}$  be an immersion and
  \[
  s_f: M\longrightarrow{T_mM\times_{SO\left(m\right)}V_m\left(\mathbb{R}^{m+k}\right)}
  \]
  the section of 
 $ \Fr_m\left(\tau_M\right)$ induced by $f$. By Theorem~\ref{hirsch} we have:
\[ 
Imm\left(M, \mathbb{R}^{m+k},f\right)\simeq{\Gamma\left(\Fr_m\left(\tau_M\right),s_f\right)}
\]
where $\Gamma\left(p,s_f\right)$ is the space of sections  of $p:T_mM\times_{SO\left(m\right)}V_m\left(\mathbb{R}^{m+k}\right)\longrightarrow M$ homotope
to $s_f$.\\
Denote 
\[
\Fr_m\left(\tau_m\right)_{\left(\mathbb{Q}\right)}=\left(\left(T_mM\times_{SO\left(m\right)}V_m\left(\mathbb{R}^{m+k}\right)\right)_{\left(\mathbb{Q}\right)}, p_{\left(\mathbb{Q}\right)}, M\right)
\] 
the fiberwise rationalization  of $p:T_mM\times_{SO\left(m\right)}V_m\left(\mathbb{R}^{m+k}\right)\longrightarrow M$ and 
\[
(s_f)_{(\mathbb{Q})} :M\longrightarrow \left(T_mM\times_{SO\left(m\right)}V_m\left(\mathbb{R}^{m+k}\right)\right)_{\left(\mathbb{Q}\right)}
\]
the induced section.\\
Since  $V_m\left(\mathbb{R}^{m+k}\right) $ is simply connected, because $k>1$, and  $M$ is simply connected and of finite type, the bundle $\left(T_mM\times_{SO\left(m\right)}T_m\left(\mathbb{R}^{m+k}\right),p,M\right)$ is nilpotent. Then by \cite[theorem 5.3]{Moller}  $\Gamma\left(\Fr_m\left(\tau_M\right),s_f\right)$ is nilpotent and we have the following rational equivalence 
\[
\Gamma\left(\Fr_m\left(\tau_M\right),s_f\right)\simeq_{\mathbb{Q}}\Gamma\left(\Fr_m\left(\tau_M\right)_{(\mathbb{Q})},(s_f)_{(\mathbb{Q})}\right).
\]
Since
 \[
 p_i\left(\tau_M\right)=0\quad \forall\quad i\geq s+1
 \]
  the Corollary~\ref{trivial} imply that $\Fr_m\left(\tau_M\right)$ is rationally trivial. And by Proposition~\ref{lupton}, $\Fr_m\left(\tau_M\right)_{\left(\mathbb{Q}\right)}$ is rationally trivial , and we have 
\[
 \Gamma\left(\Fr_m\left(\tau_M\right)_{(\mathbb{Q})},(s_f)_{(\mathbb{Q})}\right)\simeq Map\left(M ,({V_m}\left(\mathbb{R}^{m+k}\right))_{\mathbb{Q}},(\tilde{s_f})_{(\mathbb{Q})}\right).
 \]
 Since k is odd, by corollary~\ref{stiefel}, the Stiefel manifold ${V_m}\left(\mathbb{R}^{m+k}\right)$ has the rational homotopy type of product of Eilenberg Maclane spaces
  \[
{V_m}\left(\mathbb{R}^{m+k}\right)\simeq_{\mathbb{Q}}
\begin{cases}
\prod_{s\leq i\leq l+s}\emph{K}\left(\mathbb{Q},4i-1\right) \text{ if } m=2l\\
\prod_{s\leq i\leq l+s}\emph{K}\left(\mathbb{Q},4i-1\right)\times \emph{K}\left(\mathbb{Q},m+k-1\right) \text{ if } m=2l+1
\end{cases}
\]
thus
\[
Map\left(M ,{V_m}\left(\mathbb{R}^{m+k}\right),\tilde{s_f}\right)\simeq_{\mathbb{Q}}
\begin{cases}
\prod_{s\leq i\leq l+s}Map\left(M,K\left(\mathbb{Q},4i-1\right) \tilde{s}_{i}\right) \text{ if } m=2l\\
\prod_{s\leq i\leq l+s}Map\left(M,K\left(\mathbb{Q},4i-1,\tilde{s}_{i}\right)\right)\times K \text{ if } m=2l+1
\end{cases}
\]
where $\tilde{s}_{i}: M\longrightarrow K\left(\mathbb{Q},4i-1\right)$ are conponents of $(\tilde{s_f})_{(\mathbb{Q})}$.\\
By \cite{thom} each component of $Map\left(M,K\left(\mathbb{Q},4i-1\right) \right)$ has the rational homotopy type of the product of Eilenberg
Maclane spaces which depend only with rational cohomology of $M$. That is
\[
Map\left(M,K\left(\mathbb{Q},4i-1\right),s_i\right)\simeq\prod_{1\leq q\leq 4i-1}K\left(H^{4i-1-q}\left(M,\mathbb{Q}\right),q\right)
\]
therefore
\[
Map\left(M ,{V_m}\left(\mathbb{R}^{m+k}\right),\tilde{s}_f\right)\simeq_{\mathbb{Q}}
\begin{cases}
\prod_{s\leq i\leq l+s}\prod_{1\leq q\leq 4i-1}K\left(H^{4i-1-q}\left(M,\mathbb{Q}\right),q\right) \text{ if } m=2l\\
\prod_{s\leq i\leq l+s}\prod_{1\leq q\leq 4i-1}K\left(H^{4i-1-q}\left(M,\mathbb{Q}\right),q\right)\times {K}\text{ if } m=2l+1
\end{cases}
\]
where
\[
K={\prod_{1\leq j\leq m+k-1}K\left(H^{m+k-1-j}\left(M,\mathbb{Q}\right),j\right)}.
\]

\end{proof}
\begin{thm}
\label{caspair} 
Let $M$ be a simply connected $m$-manifold of finite type and $k$ and an even integer. Then:
\begin{itemize}
\item
If $k\geq m+1$,  $Imm\left(M,\mathbb{R}^{m+k}\right)$ is connected and its rational homotopy type is of the form
 \[
 Imm\left(M, \mathbb{R}^{m+k}\right)\simeq_{\mathbb{Q}}Map\left(M ,S^k\right)\times K,
 \]
 where
 $Map\left(M ,S^k\right)$ is the mapping space of maps from $M$ into $S^k$ and $K$ is a product of Eilenberg-Maclane spaces which depend only on the Betti numbers of $M$.
 \item
 If $2\leq k\leq m$,  Suppose that $p_i\left(\tau_M\right)=0, \forall i\geq s$ then, the connected component of $f$ in $Imm\left(M,\mathbb{R}^{m+k}\right)$ has the rational homotopy type of a product
 \[
  Imm\left(M, \mathbb{R}^{m+k},f\right)\simeq_{\mathbb{Q}}Map\left(M ,S^k,\tilde{f}\right)\times K,
 \]
where
 $ Imm\left(M, \mathbb{R}^{m+k},f\right)$ is the component of  $Imm\left(M, \mathbb{R}^{m+k}\right)$ which contains $f$,\\
$ K$ is the product of Eilenberg-Maclane spaces which depends only on the Betti numbers of $M$,\\
 $Map\left(M ,S^k,\tilde{f}\right)$  is the path component of the  mapping space of $M$ in $S^k$ containing the map $\tilde{f}$ induced by the immersion $f$.
 \end{itemize}
\end{thm}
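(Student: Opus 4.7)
The plan is to mirror the proof of Theorem~\ref{casod}, substituting the rational decomposition of $V_m(\mathbb{R}^{m+k})$ appropriate to even codimension. First, by Smale--Hirsch (Theorem~\ref{hirsch}) I identify the component with a component of the section space $\Gamma(\Fr_m(\tau_M), s_f)$. Then, exactly as in the proof of Theorem~\ref{casod}, I pass to the fiberwise rationalization (using the M\o ller-type result from \cite{Moller}) and apply Proposition~\ref{lupton}, reducing the problem to computing $\Gamma(\Fr_m(\tau_M)_{(\mathbb{Q})}, (s_f)_{(\mathbb{Q})})$.

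The key input is Corollary~\ref{trivial}: under the hypothesis $p_i(\tau_M)=0$ for $i \geq s = k/2$, the framed bundle is rationally trivial, hence fiberwise trivial after rationalization, and the section space becomes a component of a mapping space
\[
Map(M, V_m(\mathbb{R}^{m+k})_{(\mathbb{Q})}, \tilde{f}_{(\mathbb{Q})}).
\]
In the first bullet ($k \geq m+1$), the Pontryagin vanishing is automatic: $p_i(\tau_M) \in H^{4i}(M,\mathbb{Q})$ vanishes once $4i > m$, and for $i \geq s = k/2$ one has $4i \geq 2k \geq 2(m+1) > m$. In the second bullet it is part of the hypothesis.

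The step that is new relative to the odd case is Remark~\ref{stiefel}: for $k = 2s$ even, the minimal Sullivan model of $V_m(\mathbb{R}^{m+k})$ contains the sub-CDGA $(\Lambda(e_k, x_s), dx_s = e_k^2)$, which is precisely the minimal model of $S^k$, while the remaining generators have trivial differential. Hence rationally
\[
V_m(\mathbb{R}^{m+k}) \simeq_{\mathbb{Q}} S^k \times E,
\]
where $E$ is a product of Eilenberg--MacLane spaces $K(\mathbb{Q}, n)$ in the odd degrees listed in Remark~\ref{stiefel}. Applying $Map(M, -)$ to this splitting yields $Map(M, S^k, \tilde{f}) \times Map(M, E)$, and by Thom's theorem (as invoked in the proof of Theorem~\ref{casod}) each component of $Map(M, E)$ is a product of Eilenberg--MacLane spaces depending only on the rational Betti numbers of $M$, giving the factor $K$ in the statement.

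For the first bullet, the connectedness claim follows from the fact that $V_m(\mathbb{R}^{m+k})$ is $(k-1)$-connected and $k - 1 \geq m = \dim M$, so $[M, V_m(\mathbb{R}^{m+k})]$ is a singleton and $\tilde{f}$ is null-homotopic; hence $Map(M, S^k, \tilde{f})$ is the basepoint component of $Map(M, S^k)$, which (since $S^k$ is $(k-1)$-connected with $k > m$) is the whole mapping space. The main technical obstacle is matching basepoints and components across the chain of equivalences: one must verify that the rational homotopy class of $M \to V_m(\mathbb{R}^{m+k})_{(\mathbb{Q})} \to S^k_{(\mathbb{Q})}$ obtained by composing $(s_f)_{(\mathbb{Q})}$ with the projection from the rational product splitting agrees with the map $\tilde{f}$ asserted in the theorem statement, so that the component labelled $Map(M, S^k, \tilde{f})$ is indeed the correct one.
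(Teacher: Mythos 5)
Your proposal is correct and follows essentially the same route as the paper's proof: Smale--Hirsch, fiberwise rationalization and Proposition~\ref{lupton}, rational triviality via Corollary~\ref{trivial} (with the automatic vanishing of $p_i(\tau_M)$ for $i\geq s$ when $k\geq m+1$), the rational splitting $V_m\left(\mathbb{R}^{m+k}\right)\simeq_{\mathbb{Q}} S^k\times E$ coming from the sub-CDGA $\left(\Lambda\left(e_k,x_s\right),dx_s=e_k^2\right)$ in Remark~\ref{stiefel}, and Thom's theorem for the Eilenberg--MacLane factors. Your remarks on connectedness when $k\geq m+1$ and on matching the component $\tilde{f}$ are consistent with, and slightly more explicit than, the paper's own argument.
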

\begin{proof}

By Theorem~\ref{hirsch}, we have a homotopy equivalence between the space of immersions  of $M$ in $\mathbb{R}^{m+k}$ and the space of sections of $\Fr_m\left(\tau_M\right)$
\[
Imm\left(M, \mathbb{R}^{m+k}\right)\simeq{\Gamma\left(\Fr_m\left(\tau_M\right)\right)}.
\]
We distinguish two cases:
\begin{itemize}
\item
If $k\geq m+1$ the connectivity of $V_m\left(\mathbb{R}^{m+k}\right)$ is greater than the dimension of $M$, therefore the space of sections  of
$\Fr_m\left(\tau_M\right)$ is connected and subsequently $Imm\left(M,\mathbb{R}^{m+k}\right)$  is connected.\\
In the other hand,  for all $i\geq s, p_i\left(\tau_M\right)=0$ since $s>l$. By Corollary~\ref{trivial}, $\Fr_m\left(\tau_M\right)$ is
rationally trivial and we have
\[
 \Gamma\left( \Theta_{\mathbb{R}^{m+k}}\left(\tau_M\right)_{\mathbb{Q}}\right)\simeq Map\left(M ,{V_m}\left(\mathbb{R}^{m+k}\right)\right).
 \]
 By Remark~\ref{stiefel}
   \[
{V_m}\left(\mathbb{R}^{m+k}\right)\simeq_{\mathbb{Q}}
\begin{cases}
\prod_{s+1\leq i\leq l+s-1}\emph{K}\left(\mathbb{Q},4i-1\right)\times S^k \text{ if } m=2l+1\\
\prod_{s+1\leq i\leq l+s-1}\emph{K}\left(\mathbb{Q},4i-1\right)\times \emph{K}\left(\mathbb{Q},m+k-1\right)\times S^k \text{ if } m=2l
\end{cases}
\]
therefore,
\[
Map\left(M ,{V_m}\left(\mathbb{R}^{m+k}\right)\right)\simeq_{\mathbb{Q}}
\begin{cases}
\prod_{s+1\leq i\leq l+s}Map\left(M,K\left(\mathbb{Q},4i-1\right)\right)\times {H}\text{ if } m=2l+1\\
\prod_{s+1\leq i\leq l+s}Map\left(M,K\left(\mathbb{Q},4i-1\right)\right)\times{A} \times {H} \text{ if } m=2l
\end{cases}
\]
where
\[
H=Map\left(M,S^k\right)
\] 
and
\[
A=Map\left(M,K\left(\mathbb{Q},m+k-1\right)\right)
\]
finally , by \cite{ thom}, we have
\[
Imm\left(M ,\mathbb{R}^{m+k}\right)\simeq_{\mathbb{Q}}
\begin{cases}
\prod_{s+1\leq i\leq l+s}\prod_{1\leq q\leq 4i-1}K\left(H^{4i-1-q}\left(M,\mathbb{Q}\right),q\right)\times {H} \text{ si } m=2l+1\\
\prod_{s+1\leq i\leq l+s}\prod_{1\leq q\leq 4i-1}K\left(H^{4i-1-q}\left(M,\mathbb{Q}\right),q\right)\times {H} \times {K}\text{ si } m=2l
\end{cases}
\]
where
\[
K={\prod_{1\leq j\leq m+k-1}K\left(H^{m+k-1-j}\left(M,\mathbb{Q}\right),j\right)}.
\]
\item{ If $2\leq k\leq m$}, in this cas $Imm\left(M, \mathbb{R}^{m+k}\right)$ is not necessarily  connected. Since $p_i\left(\tau_M\right)=0, \forall i\geq s$
  by the same arguments as in the proof of Theorem~\ref{casod} for  $f\in Imm\left(M, \mathbb{R}^{m+k}\right)$we obtain:
\[
Imm\left(M ,\mathbb{R}^{m+k},f\right)\simeq_{\mathbb{Q}}
\begin{cases}
\prod_{s+1\leq i\leq l+s}\prod_{1\leq q\leq 4i-1}K\left(H^{4i-1-q}\left(M,\mathbb{Q}\right),q\right)\times  {G}\text{ if } m=2l+1\\
\prod_{s+1\leq i\leq l+s}\prod_{1\leq q\leq 4i-1}K\left(H^{4i-1-q}\left(M,\mathbb{Q}\right),q\right)\times {G}\times {K}\text{ if } m=2l
\end{cases}
\]
where
\[
G=Map\left(M,S^k,f_k\right)
\]
and
\[
K={\prod_{1\leq j\leq m+k-1}K\left(H^{m+k-1-j}\left(M,\mathbb{Q}\right),j\right)}.
\]

\end{itemize}
\end{proof}
\begin{coro}
\label{idem}
If $H^k\left(M,\mathbb{Q}\right)=0$, then all the path connected of $Imm\left(M,\mathbb{R}^{m+k}\right)$ have the same rational homotopy type, and we can take $\tilde{f}$ to be the constant map.
\end{coro}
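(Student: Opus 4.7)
The plan is to use the product decompositions from Theorems~\ref{casod} and~\ref{caspair} to isolate the component-dependent factor of $Imm(M,\mathbb{R}^{m+k},f)$, and then show that this factor is rationally independent of $f$ when $H^{k}(M,\mathbb{Q})=0$. Several cases are immediate: for $k$ odd, the formula in Theorem~\ref{casod} depends only on the Betti numbers of $M$ and on $k$; for $k\geq m+1$, the space $Imm(M,\mathbb{R}^{m+k})$ is already rationally connected by Theorem~\ref{caspair}. The substantive case is $k$ even with $2\leq k\leq m$, where Theorem~\ref{caspair} gives
\[
Imm(M,\mathbb{R}^{m+k},f)\simeq_{\mathbb{Q}}Map(M,S^{k},\tilde f)\times K
\]
with $K$ independent of $f$. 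It therefore suffices to show $Map(M,S^{k},\tilde f)\simeq_{\mathbb{Q}}Map(M,S^{k},\mathrm{const})$ for every immersion $f$.

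For this step I would exploit the rational Postnikov tower of $S^{k}$ for $k$ even, which is the principal $K(\mathbb{Q},2k-1)$-fibration
\[
K(\mathbb{Q},2k-1)\longrightarrow S^{k}_{\mathbb{Q}}\longrightarrow K(\mathbb{Q},k)
\]
classified by the squaring k-invariant $\iota_{k}^{2}\colon K(\mathbb{Q},k)\to K(\mathbb{Q},2k)$. Fiberwise translation by $K(\mathbb{Q},2k-1)$ yields an action of $Map(M,K(\mathbb{Q},2k-1))$ on $Map(M,S^{k}_{\mathbb{Q}})$ by homeomorphisms permuting path components, and in particular an action of $H^{2k-1}(M,\mathbb{Q})$ on $\pi_{0}Map(M,S^{k}_{\mathbb{Q}})$. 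Reading off the tail of the long exact sequence of homotopy groups for the fibration obtained by applying $Map(M,-)$, one finds
\[
H^{k-1}(M,\mathbb{Q})\xrightarrow{\delta}H^{2k-1}(M,\mathbb{Q})\longrightarrow\pi_{0}Map(M,S^{k}_{\mathbb{Q}})\longrightarrow H^{k}(M,\mathbb{Q})\xrightarrow{\mathrm{sq}}H^{2k}(M,\mathbb{Q}),
\]
in which $\delta$ is induced by $\Omega(\iota_{k}^{2})\colon K(\mathbb{Q},k-1)\to K(\mathbb{Q},2k-1)$. Since $k-1$ is odd, $H^{*}(K(\mathbb{Q},k-1),\mathbb{Q})$ is exterior on a single generator in degree $k-1$, so $H^{2k-1}(K(\mathbb{Q},k-1),\mathbb{Q})=0$ and hence $\delta=0$. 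Under the hypothesis $H^{k}(M,\mathbb{Q})=0$ the rightmost term also vanishes, giving $\pi_{0}Map(M,S^{k}_{\mathbb{Q}})\cong H^{2k-1}(M,\mathbb{Q})$ with the translation action simply transitive.

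This transitive translation furnishes, for any $\tilde f$, a homeomorphism from the component $Map(M,S^{k}_{\mathbb{Q}},\tilde f_{\mathbb{Q}})$ onto $Map(M,S^{k}_{\mathbb{Q}},\mathrm{const})$. Combined with the standard rationalisation equivalence $Map(M,S^{k},\tilde f)\simeq_{\mathbb{Q}}Map(M,S^{k}_{\mathbb{Q}},\tilde f_{\mathbb{Q}})$ (valid since $M$ is a simply connected finite-type CW complex and $S^{k}$ is nilpotent of finite type), this yields the desired rational equivalence $Map(M,S^{k},\tilde f)\simeq_{\mathbb{Q}}Map(M,S^{k},\mathrm{const})$ and hence the corollary. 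The main technical point to verify is the transitivity of the translation action on $\pi_{0}Map(M,S^{k}_{\mathbb{Q}})$; without the hypothesis $H^{k}(M,\mathbb{Q})=0$ the sequence above produces a non-trivial extension of $\ker(\mathrm{sq})\subset H^{k}(M,\mathbb{Q})$ by $H^{2k-1}(M,\mathbb{Q})$, and components sitting above distinct classes of $\ker(\mathrm{sq})$ can have genuinely different rational homotopy types, so this hypothesis is essential.
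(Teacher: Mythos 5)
Your reduction to showing $Map(M,S^k,\tilde f)\simeq_{\mathbb{Q}}Map(M,S^k,\mathrm{const})$ is exactly the paper's, but your proof of that key step is genuinely different. The paper argues algebraically: it takes a Sullivan representative $\phi\colon(\Lambda(x,y),d)\to(A,d_A)$ of $\tilde f$, uses $H^k(A,d_A)=0$ to homotope $\phi$ so that $\phi(x)=0$, and then performs the change of variable $y'=y-\phi(y)$ so that the data $(\Lambda(x,y),A,\sigma)$ feeding into the model of the corresponding component of the mapping space becomes identical to the data for the constant map; hence the two components have the same model. You instead argue topologically, viewing $S^k_{\mathbb{Q}}$ ($k$ even) as the principal $K(\mathbb{Q},2k-1)$-fibration over $K(\mathbb{Q},k)$ with $k$-invariant $\iota_k^2$, and using the translation action together with the exact sequence of pointed sets to see that, when $H^k(M;\mathbb{Q})=0$, the action is transitive on $\pi_0\,Map(M,S^k_{\mathbb{Q}})$, so all components of $Map(M,S^k_{\mathbb{Q}})$ are equivalent; you then transfer back via the standard rationalization of mapping-space components. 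Both routes are valid: the paper's stays inside the Sullivan-model framework already in use and is very short once one grants that the model of a component depends only on $(A,\Lambda(x,y),\sigma)$, while yours avoids models of function spaces altogether, identifies $\pi_0$ explicitly, and makes visible why the hypothesis $H^k(M;\mathbb{Q})=0$ is essential. Two small points to tidy: the translations are homotopy equivalences rather than homeomorphisms unless you choose a strict principal-bundle model of the fibration (homotopy equivalences suffice, since a self-equivalence of the total mapping space restricts to equivalences between corresponding components); and the tail of your sequence is only an exact sequence of pointed sets (the squaring map is quadratic), where what you actually need is transitivity --- not simple transitivity --- which is the standard statement that the orbits of the $[M,K(\mathbb{Q},2k-1)]$-action are precisely the fibers of $[M,S^k_{\mathbb{Q}}]\to H^k(M;\mathbb{Q})$. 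Finally, the rationalization equivalence for components of $Map(M,S^k)$ requires $M$ to be a finite(-dimensional) complex, which holds here because $M$ is an $m$-manifold of finite type, so your invocation is justified.
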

\begin{proof}
%[Proof of Corollary~\ref{idem}]
It suffices to prove that for each $f: M\longrightarrow S^k$ we have
\[
Map(M,S^k,f)\simeq_{\mathbb{Q}} Map(M,S^k,0).
\]
Let $(\Lambda(x,y),d)$ be the model of $S^k$ and $(A,d)$ a finite dimensional model of $M$. Let 
\[
\phi: (\Lambda(x,y),d)\longrightarrow (A,d_A)
\]
be the model of $f$. Since $H^k(A,d)=0$, then $\phi$ is homotope to some map which send $x$ to $0$. Replace $\phi$ by this map. In this case $\phi(y)=a$ is a cocycle.\\
Now  we consider 
\[
A\otimes \Lambda(x,y)\overset{\sigma=id\otimes\phi}\longrightarrow A
\]
and proceed to a change of variable in order to have
\[
\sigma(x)=\sigma(y)=0.
\]
Pose $y'=y-\phi(y)=y-a$ and $dy'=dy$. \\
We can suppose with the same differential that $\sigma(x)=\sigma(y)=0$.
All the following steps in the model of  $Map(M,S^k,f)$ depend only on $\Lambda(x,y),A$ and $\sigma$, the result is the same for each $f$. Henceforth we
have 
\[
Map(M,S^k,f)\simeq_{\mathbb{Q}} Map(M,S^k,0).
\]
 Thenceforward all the components of $Imm\left(M ,\mathbb{R}^{m+k},f\right)$ have the same rational homotopy type. 
\end{proof}
From this result we deduce 
\begin{coro} 
\label{betti}
If $M$ is a simply connected manifold of dimension $m$ and $k\geq2$ an integer. Then the rational Betti numbers of each component of the space of immersions of $M$ in $\mathbb{R}^{m+k}$ have polynomial growth.
\end{coro}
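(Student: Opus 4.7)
The plan is to use Smale-Hirsch together with the explicit relative Sullivan model of $\Fr_m(\tau_M)$ furnished by Proposition~\ref{prop:principal} to present each component of $Imm(M,\mathbb{R}^{m+k})$ as the component of a section space whose Sullivan model has only finitely many generators; polynomial growth of Betti numbers then follows automatically from the structure of a free graded-commutative algebra on finitely many generators.

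First, by Theorem~\ref{hirsch}, for any $f\in Imm(M,\mathbb{R}^{m+k})$ one has $Imm(M,\mathbb{R}^{m+k},f)\simeq \Gamma(\Fr_m(\tau_M),s_f)$. Since $k\geq 2$ the fibre $V_m(\mathbb{R}^{m+k})$ is simply connected, so the bundle is nilpotent, and exactly as in the proof of Theorem~\ref{casod} one may pass to the fibrewise rationalization without changing the rational homotopy type. The problem thus reduces to bounding the Betti numbers of $\Gamma(\Fr_m(\tau_M)_{(\mathbb{Q})},(s_f)_{(\mathbb{Q})})$.

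Next I would feed the relative Sullivan model $(A,d_A)\hookrightarrow (A\otimes\Lambda V,D)$ of $\Fr_m(\tau_M)$ given by Proposition~\ref{prop:principal} into the Haefliger-Brown-Szczarba construction of a Sullivan model of a section space. The crucial observation is that, whichever of the four cases of the proposition is in force, the space of fibre generators $V$ is finite-dimensional (it consists of the generators $x_i$ together with at most one $\bar{e}_{m+k-1}$ and at most one $e_k$); equivalently, by Remark~\ref{stiefel}, the fibre $V_m(\mathbb{R}^{m+k})$ is rationally elliptic. The resulting Sullivan model $(\Lambda W,\delta)$ of the targeted component has a generating space $W$ sitting inside the positive-degree part of $V\otimes H^{*}(M,\mathbb{Q})^{\vee}$; since both $V$ and $H^{*}(M,\mathbb{Q})$ are finite-dimensional, so is $W$.

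Any Sullivan algebra $(\Lambda W,\delta)$ with $W$ finite-dimensional satisfies
\[
\dim H^n(\Lambda W,\delta)\leq \dim (\Lambda W)^n,
\]
and $\Lambda W$ is a finite tensor product of polynomial algebras on the even generators with exterior algebras on the odd ones; its Poincar\'e series is thus a rational function whose $n$-th coefficient grows at most polynomially in $n$. This yields the desired polynomial bound on the Betti numbers of $Imm(M,\mathbb{R}^{m+k},f)$. The main technical obstacle lies in the penultimate step: without the vanishing hypotheses on Pontryagin classes that are used in Theorems~\ref{casod} and~\ref{caspair} the bundle $\Fr_m(\tau_M)$ need not be rationally trivial, so the clean product decompositions available there do not apply; one must instead run the section-space model on the genuinely nontrivial relative Sullivan algebra of Proposition~\ref{prop:principal}, and rely on the fact that only the cardinality of $W$, not the form of $\delta$, enters the final estimate.
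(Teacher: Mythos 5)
Your argument is correct, but it takes a genuinely different route from the paper, and in fact a more general one. The paper gives no standalone proof of Corollary~\ref{betti}: it deduces the polynomial growth from the explicit product decompositions of Theorems~\ref{casod} and~\ref{caspair}, where each component is identified with a finite product of Eilenberg--MacLane spaces and (in even codimension) a component of $Map\left(M,S^k\right)$. Those theorems, however, carry vanishing hypotheses on the Pontryagin classes of $\tau_M$, which are what make $\Fr_m\left(\tau_M\right)$ rationally trivial via Corollary~\ref{trivial}; strictly speaking the paper's deduction therefore only covers the cases where those hypotheses hold (or where they are automatic because the codimension is large relative to $m$). Your proof dispenses with rational triviality altogether: after Smale--Hirsch (Theorem~\ref{hirsch}) and fibrewise rationalization you run the Haefliger--Brown--Szczarba section-space model directly on the possibly nontrivial relative Sullivan algebra of Proposition~\ref{prop:principal}, and you use only that the fibre-generating space $V$ is finite-dimensional and that $M$ admits a finite-dimensional CDGA model, so that the component of the section space has a Sullivan model $\left(\Lambda W,\delta\right)$ with $\dim W<\infty$ and hence $\dim H^n\leq\dim\left(\Lambda W\right)^n$, which grows polynomially. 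This proves the corollary exactly as stated, with no hypothesis on Pontryagin classes, at the cost of losing the explicit homotopy types that the paper's route provides. One minor imprecision: the generating space of the section-space model sits in $\left(V\otimes A_\vee\right)^{>0}$ for a finite-dimensional CDGA model $A$ of $M$ rather than in $V\otimes H^{*}\left(M;\mathbb{Q}\right)^{\vee}$; since $M$ is of finite type and of dimension $m$ such an $A$ exists and is finite-dimensional, so the finiteness your estimate relies on is unaffected.
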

\begin{rmq}
\label{alp}
In \cite{alp} Arone, Lambrechts and Pryor prove that, if $\chi\left(M\right)\leq -2$ and $k\geq {m+1}$ the rational Betti numbers of space of smooth embeddings of $M$ in $\mathbb{R}^{m+k} $ modulo immersions, $\overline{Emb}\left(M ,\mathbb{R}^{m+k}\right)$, have exponential growth. By corollary~\ref{betti} we deduce that the Betti numbers of smooth embedding $Emb\left(M ,\mathbb{R}^{m+k}\right)$ have exponential growth.
\end{rmq}

\bibliographystyle{plain}
\bibliography{mabiblio-2}

\textsf{Universit\'e catholique de Louvain, Chemin du Cyclotron 2, B-1348 Louvain-la-Neuve, Belgique\\
Institut de Recherche en Math\'ematique et Physique\\}
\textit{E-mail address: abdoul.yacouba@uclouvain.be}
\end{document}